\documentclass[10pt]{amsart}

\usepackage{color,graphicx,shortvrb}
\usepackage[latin 1]{inputenc}
\usepackage{mathtools} %para poner brackets debajo de las formulas
\usepackage[active]{srcltx} %SRC Specials for DVI search
\usepackage[colorlinks, linkcolor=blue, citecolor=blue, urlcolor=blue,
breaklinks=true]{hyperref}

\usepackage{enumerate}

% Bibliography
%-----------------------------------------------------------------

% This uses a bibliography style which hyperlinks the paper titles to
% the paper URL specified in the bibtex file. It also uses natbib,
% which cites papers by name such as Euler (1770) instead of [17].

% \usepackage{cite}
% \usepackage{breakurl}
\usepackage[sort]{natbib}
\usepackage{url}
% \bibliographystyle{plainnat-linked-initials}
% \bibliographystyle{plain}

% Theorems
%-----------------------------------------------------------------

\newtheorem{theorem}{Theorem}[section]

\newtheorem{corollary}[theorem]{Corollary}

\newtheorem{lemma}[theorem]{Lemma}

\newtheorem{hypothesis}{Hypothesis}
\theoremstyle{definition}

\theoremstyle{remark}
\newtheorem{remark}[theorem]{Remark}

\def\RR{{\mathbb{R}}}

\def\11{\textbf{$1$}}

\def\C#1{ \mathcal{#1} }

\def\ir{\displaystyle{\int_{\RR^N}}}

\DeclareGraphicsExtensions{.jpg,.pdf,.png,.eps}

\DeclareMathOperator{\sgn}{sgn}

% =========================================================================

% Shortcuts - Jose
%-----------------------------------------------------------------

% Absolute value 

% Inner product
\newcommand{\ap}[1]{\left\langle#1\right\rangle}
% Norm

% Triple norm

\def\:{\colon}

% Common double-stroke letters
\def\C{\mathbb{C}}

\def\R{\mathbb{R}}

\def\D{\mathcal{D}}

%\DeclareMathOperator{\sgn}{sgn}

% Notation for differentials
\def\d{\,\mathrm{d}}

\def\dx{\d x}
\def\dy{\d y}
\def \ddt{\frac{\mathrm{d}}{\mathrm{d}t}}
\def \ddt{\frac{\mathrm{d}}{\mathrm{d}t}}

\def\p{\partial}

%%%%%%%%%%%%%%%%%%%%%%%%%%INTEGRAL

% \renewcommand{\ir}{bla}
% \def\irr{\int_{\R}}

\def\irN{\int_{\R^N}}

\def \ir3 {\int_{\R^3}}

\def\ir{\int_{\R}}

%%%%%%%%%%%%%%%%%%%%%%%%%%INTEGRAL

% =========================================================================

%%usepackage{showkeys}
%%%%%%%%%%%%%%%%%%%%%%%%%%%%%%%%%%%%%%%%%%%%%%%%%%%%%%%%%%%%%%%%%%%%%%%%%%%%%%%%%%%%%%%%%%%%%%%%%%%

%\usepackage[active]{srcltx} %SRC Specials for DVI search
%%%%%%%%%%%%%%%%%%%%%%%%%%%%%%%%%%%%%%%%%%%%%%%%%%%%%%%%%%%%%%%%%%%%%%%%%%%%%

\usepackage{enumerate}

\title{Improved energy methods for nonlocal diffusion problems}
\author{José A. Cañizo}
\address{Departamento de Matemática Aplicada\\Universidad de
  Granada\\18071 Granada, Spain}
\email{canizo@ugr.es}

\author{Alexis Molino}
\address{Departamento de Análisis Matemático\\Universidad de
  Granada\\18071 Granada, Spain}
\email{amolino@ugr.es}

\date{April 2017}     % Activate to display a given date or no date

\begin{document}

\begin{abstract}
  We prove an energy inequality for nonlocal diffusion operators of
  the following type, and some of its generalisations:
  \begin{equation*}
    Lu (x) := \irN K(x,y) (u(y) - u(x)) \d y,
  \end{equation*}
  where $L$ acts on a real function $u$ defined on $\R^N$, and we
  assume that $K(x,y)$ is uniformly strictly positive in a
  neighbourhood of $x=y$. The inequality is a nonlocal analogue of the
  Nash inequality, and plays a similar role in the study of the
  asymptotic decay of solutions to the nonlocal diffusion equation
  $\partial_t u = L u$ as the Nash inequality does for the heat
  equation. The inequality allows us to give a precise decay rate of
  the $L^p$ norms of $u$ and its derivatives. As compared to existing
  decay results in the literature, our proof is perhaps simpler and
  gives new results in some cases.
\end{abstract}

\maketitle

\tableofcontents

\section{Introduction}
\label{sec:intro}

In this paper we develop \emph{energy methods} which are useful in the
study of some partial differential equations involving nonlocal
diffusion terms. We start by the basic example which is the following
integro-differential equation in convolution form:
\begin{equation}
  \label{eq:nonlocal}
  \p_t u(t,x) = \irN J(x-y) \big( u(t,y)  - u(t,x) \big) \d y,
  \qquad u(0,x) = u_0(x)
\end{equation}
where $t \geq 0$ is the time variable, $x \in \RR^N$ is the space
variable, $u = u(t,x) \in \R$ is the unknown, and $J$ is the
\emph{diffusion kernel}. Typically one assumes that $J$ is smooth,
nonnegative, radially symmetric, and with integral $1$; we also
mention a variety of models with different assumptions and variations
of \eqref{eq:nonlocal} in Section \ref{sec:applications}. Equation
\eqref{eq:nonlocal} and its relatives appear as a nonlocal version of
the usual diffusion equation $\p_t u = \Delta u$, and it is known that
\eqref{eq:nonlocal} approximates it when $J$ is close to a Delta
function (see Theorem \ref{thm:heat-approx-decay} and the remarks
before it).

We will apply energy methods to deal with nonlocal problems that not
necessarily involve a convolution. That is, problems of the form
\begin{equation}\label{gen-kernel}
  \p_t u(t,x) = \irN K(x,y) (u(t,y) - u(t,x)) \d y,
\end{equation}
where our main hypotheses on $K$ can be summarized as follows:
$K(x,y)$ is a nonnegative symmetric function with
$\sup_{y \in \RR^N}\irN K(x,y)dx \leq C_K$ and such that $K$ is
strictly positive in a neighborhood of the closet set
$\{x=y\}$. Furthermore, the symmetry of $K$ can be replaced by
integrability conditions (see Subsection \ref{no-convolution}). On the
other hand, observe that it makes sense to assume that $K(x,x)>0$
since in many models it means that the probability that individuals
remain for some time at the point where they are is positive.

As a particular application which motivates our arguments we consider
the nonlocal dispersal model proposed by \citet{CCEM} (see also
\cite{CEG-MM2011,CEG-MM2016,CEG-MM}):
\begin{equation}
  \label{dispersal-eq-intro}
  \p_t u(t,x) =
  \ir J\left(\frac{x-y}{g(y)}\right)
  \frac{u(t,y)}{g(y)}\d y-u(t,x),
  \qquad
  \text{in $\RR \times [0,\infty)$,}
\end{equation}
with a prescribed initial data $u(x,0)=u_0(x)$ defined on $\RR$. Here
$J$ is an even, positive, smooth function such that
$\int_{\RR} J(x)\d x = 1$ and $\hbox{supp } J = [-1,1]$, and $g$ is a
continuous positive function which accounts for the {\it {dispersal
    distance}} which depends on the departing point. In this model $u$
represents the spatial distribution of a certain species, and $g$
models the heterogeneity of the environment which can affect the
distribution of a species through space-dependent dispersal
strategies. For this model we are able to give an explicit rate of
decay of the $L^p$ norm of solutions, which is to our knowledge a new
result (see Theorem \ref{th-dispersal}).

\medskip The driving idea of our methods is that solutions to
\eqref{eq:nonlocal} behave in many ways like solutions to the heat
equation
\begin{equation}
  \label{eq:heat}
  \p_t u = \Delta u,
  \qquad u(0,x) = u_0,
\end{equation}
where as usual the Laplacian $\Delta$ acts only on the space variable
$x$ (see Theorem \ref{thm:heat-approx-decay} and the comments before
it). For more details we refer the reader to \cite{SunLiYang2011} for
the Cauchy problem, \cite{CER2009} for Dirichlet boundary conditions
(see also \cite{MR2016} in a more general framework) and
\cite{CERW2008} for Neumann boundary conditions. One important
property of \eqref{eq:heat} is the following time decay of solutions
(see for instance \cite{Giga}): there is a constant $C = C(N,p) > 0$
such that
\begin{equation}
  \label{eq:heat-decay}
  \|u\|_p^p \leq \big(
  \|u_0\|_p^{-p \gamma} + C \|u_0\|_1^{-p \gamma} t
  \big)^{-\frac{1}{\gamma}},
  \qquad
  \text{for all $t \geq 0$},
\end{equation}
which holds for any $1 < p < +\infty$ and any initial  condition
$u_0 \in L^1(\R^N) \cap L^p(\R^N)$ nontrivial, and where
\begin{equation*}
  \gamma := \frac{2}{N(p-1)}.
\end{equation*}
In fact, it still holds for $u_0 \in L^1(\R^N)$ and all $t > 0$ by
removing the term $\|u_0\|_p^{-p \gamma}$. Here and below,
$L^p(\R^N)$ denotes the usual Lebesgue space of $p$-integrable
functions on $\R^N$, with associated norm denoted by
$\|\cdot\|_p$. There are several ways of showing this decay and
regularization property for the heat equation. One of them is noticing
that the $L^p$ norms are \emph{Lyapunov functionals} for
\eqref{eq:heat}: if $u$ solves \eqref{eq:heat} with
$u_0 \in L^p(\R^N)$ then
\begin{equation}
  \label{eq:heat-Lp-evol}
  \ddt \|u\|_p^p = - \frac{4 (p-1)}{p} \irN \big| \nabla (u^{\frac{p}{2}}) \big|^2.
\end{equation}
One can then compare the right hand side term to $\|u\|_p$ by using
the Gagliardo-Nirenberg-Sobolev inequality (which in this particular
case is known as the Nash inequality \cite{nash1958continuity})
\begin{equation}
  \label{eq:GNS}
  \|v\|_{2} \leq C_{{N}} \ \|\nabla v\|_2^\theta\ \|v\|_1^{1-\theta},
\end{equation}
with
\begin{equation*}
  \theta := \frac{N}{N+2}.
\end{equation*}
This inequality is valid in any dimension $N$; in dimensions $N \geq 3$
it can easily be obtained as a consequence of the more familiar
Sobolev inequality $\|u\|_{2^*} \leq C \|\nabla u\|_2$, where $2^* :=
2N / (N - 2)$. By using \eqref{eq:GNS} with $v = u^{p/2}$ we obtain
for any $p \geq 2$ that
\begin{equation}
  \label{eq:GNS-heat}
  \irN \big| \nabla (u^{\frac{p}{2}}) \big|^2
  \geq
  C_{{N}}^{-\frac{2}{\theta}}\,
  \|u\|_p^{\frac{p}{\theta}} \,
  \|u\|_{\frac{p}{2}}^{-\frac{p(1-\theta)}{\theta}}
  \geq
  C_{{N}}^{-\frac{2}{\theta}}\,
  \|u\|_p^{p(1+\gamma)} \,
  \|u\|_{1}^{-p \gamma},
\end{equation}
where the last step is obtained through an interpolation of
$\|u\|_{p/2}$ between $\|u\|_p$ and $\|u\|_1$. Due to mass
conservation for the heat equation we have $\|u\|_1 \leq \|u_0\|_1$
for all times $t \geq 0$ (this inequality is of course an equality for
nonnegative, finite-mass solutions). Hence using \eqref{eq:GNS-heat}
in \eqref{eq:heat-Lp-evol} one has
\begin{equation*}
  \ddt \|u\|_p^p \leq - C \, \|u\|_p^{p(1+\gamma)} \, \|u_0\|_1^{-p \gamma},
\end{equation*}
for some constant $C = C(N,p)$. This is a differential inequality for
$\|u\|_p$ that readily gives the decay \eqref{eq:heat-decay}.

In the context of diffusion equations, the strategy of using the $L^p$
norm of $u$ and its derivative as a means for studying properties of
solutions is known as the \emph{energy method}. It is a close relative
of a common and quite successful strategy in kinetic equations and
dissipative PDE sometimes known as the \emph{entropy method}
\citep{MR1760620, Gross, citeulike:2859252, CJMTU, citeulike:8176023,
  citeulike:8175960, Villani02, DV2004}, where one compares the time
derivative of a Lyapunov functional with the Lyapunov functional
itself via a functional inequality in order to obtain a certain decay
rate for solutions. These energy methods have the advantage of being
quite robust, often being applicable to equations that are not
explicitly solvable by Fourier transform methods, and to nonlinear
problems. The question that motivates this paper is whether these
ideas can be adapted to equation \eqref{eq:nonlocal} in order to show
a decay property similar to \eqref{eq:heat-decay}. One important
observation is that the same statement cannot be true for solutions of
\eqref{eq:nonlocal}, since there is no instantaneous $L^1$ to $L^p$
regularization. In fact, the $L^p$ norms are still a Lyapunov
functional for \eqref{eq:nonlocal} (as is well known, any convex
function gives a Lyapunov functional for \eqref{eq:nonlocal}): if $u$
is an $L^p$ solution to \eqref{eq:nonlocal} then
\begin{equation}
  \label{eq:nonlocal-Lp-evol}
  \ddt \|u\|_p^p = - \mathcal{D}_p^J(u).
\end{equation}
Here, the \emph{$L^p$ dissipation} $\D_p^J(u)$ is defined for any measurable
$u\: \R^N \to \R$ as
\begin{equation}
  \label{eq:Dp2}
  \D_p^J(u) := \frac{p}{2} \irN \irN J(x-y) \big(u(x) - u(y)\big)
  \big(\phi_{p-1}(u(x)) - \phi_{p-1}(u(y)) \big) \d x \d y,
\end{equation}
where for $q > 0$ we denote by $\phi_q$ the antisymmetric extension of
the usual $q$-th power, that is,
\begin{equation*}
  \phi_q(s) := |s|^{q} \sgn(s),
  \qquad s \in \R.
\end{equation*}
Of course,
since $\phi_{p-1}$ is nondecreasing, the integrand in \eqref{eq:Dp2}
is also nonnegative and always makes sense as a number in
$[0,+\infty]$. We point out that for nonnegative $u$ the expression
becomes a bit simpler,
\begin{equation*}
  \D_p^J(u) := \frac{p}{2} \irN \irN J(x-y) \big(u(x) - u(y)\big)
  \big(u(x)^{p-1} - u(y)^{p-1}\big) \d x \d y.
\end{equation*}
Precisely this strategy was discussed in \cite{citeulike:5333882},
where it was remarked that no inequality of the following form can
hold, for any $q > 2$ and a smooth, nonnegative, compactly supported
function $J$:
\begin{equation*}
  \D_2^J(u) \geq C \|u\|_q^2.
\end{equation*}
Hence the natural analogue of the usual Sobolev inequality does not
hold in the nonlocal case. Similarly, the direct analogue of
\eqref{eq:GNS-heat} (with $\D_p^J(u)$ on the left hand side) cannot
hold, since it would imply an $L^1 - L^p$ regularization effect on
\eqref{eq:nonlocal} which is known to fail. In view of this failure, a
different strategy was followed there, leading to different
inequalities and applications to several linear and nonlinear
equations involving nonlocal diffusions. Similar ideas were developed
in \cite{BP13} in order to establish decay estimates for fractional diffusions, with modified inequalities used in place of the
usual Nash inequality. After the statement of our results we compare
them in more detail to those in \cite{citeulike:5333882,BP13} and
other previous works.

\medskip \noindent
\textbf{Main results.} Our purpose is to show a simple inequality that
plays the role of \eqref{eq:GNS-heat} and provides a means to show
precise decay properties of \eqref{eq:nonlocal} and \eqref{gen-kernel}:
\begin{hypothesis}
  \label{hyp:J}
  $J \: \R^N \to [0,+\infty)$ is a measurable function such that
  for some $r, R > 0$ we have
  \begin{equation}
    \label{eq:J-lower-bound}
    J(z) \geq r, \quad \text{for all $|z| < R$}\,.
  \end{equation}
\end{hypothesis}
In particular, this is obviously satisfied if $J$ is continuous in a 
neighborhood of $0$ with $J(0) > 0$.
\begin{theorem}[$L^p$ energy inequality]
  \label{teo-principal}
  Let $J:\R^N \to \R$ be a function satisfying Hypothesis \ref{hyp:J}.
  For every $N\geq 1$ and $p\geq 2$, there exists a positive constant
  $C=C(N,p) > 0$ such that
  \begin{equation}\label{ineq-principal}
    \D_p^J(u) \geq
    C r \min \left\{
      R^{N+2} \|u\|_1^{-p \gamma}\, \|u\|_p^{p(1+\gamma)},\,
      R^N \|u\|_p^p
    \right\},
  \end{equation}
  for all $u \in L^1(\R^N)\cap L^p(\R^N)$, where
  $\gamma := \frac{2}{N(p-1)}$.
\end{theorem}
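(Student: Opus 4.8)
The plan is to reduce \eqref{ineq-principal}, via a pointwise inequality and the lower bound \eqref{eq:J-lower-bound} on $J$, to a \emph{nonlocal Nash inequality} for $v:=\phi_{p/2}(u)$ --- essentially the case $p=2$ --- which I would then establish by the Fourier transform. For the first step, the integrand in \eqref{eq:Dp2} is nonnegative, so \eqref{eq:J-lower-bound} gives $\D_p^J(u)\ge\frac{pr}{2}\iint_{|x-y|<R}(u(x)-u(y))(\phi_{p-1}(u(x))-\phi_{p-1}(u(y)))\dx\dy$; combined with the elementary pointwise inequality
\[
  (a-b)\bigl(\phi_{p-1}(a)-\phi_{p-1}(b)\bigr)\;\ge\;\tfrac{4(p-1)}{p^{2}}\,\bigl(\phi_{p/2}(a)-\phi_{p/2}(b)\bigr)^{2},\qquad a,b\in\R,\ p\ge 2
\]
(for $a,b\ge 0$ this is Cauchy--Schwarz applied to $\phi_{p/2}(a)-\phi_{p/2}(b)=\tfrac{p}{2}\int_{b}^{a}s^{p/2-1}\d s$; for $a,b$ of opposite sign it follows from AM--GM together with $\tfrac{4(p-1)}{p^{2}}\le 1$) this yields $\D_p^J(u)\ge\frac{2(p-1)r}{p}Q_R(v)$ with $Q_R(v):=\iint_{|x-y|<R}|v(x)-v(y)|^{2}\dx\dy$. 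Note $\|v\|_2^{2}=\|u\|_p^{p}$ and $\|v\|_1=\|u\|_{p/2}^{p/2}$, and $\|u\|_{p/2}\le\|u\|_1^{1/(p-1)}\|u\|_p^{(p-2)/(p-1)}$ by interpolation, so $v\in L^1(\R^N)\cap L^2(\R^N)$ whenever $u\in L^1(\R^N)\cap L^p(\R^N)$.

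The core is then the nonlocal Nash inequality: there is $c_N>0$ such that $Q_R(v)\ge c_N\min\{R^{N+2}\|v\|_1^{-4/N}\|v\|_2^{2+4/N},\,R^N\|v\|_2^2\}$ for all $v\in L^1(\R^N)\cap L^2(\R^N)$. I would prove it by Fourier analysis. Writing $Q_R(v)=\int_{|h|<R}\|v(\cdot+h)-v\|_2^2\d h$ and using Plancherel (with $\widehat v(\xi)=\int_{\R^N}v(x)e^{-2\pi i x\cdot\xi}\dx$) gives
\[
  Q_R(v)=\int_{\R^N}|\widehat v(\xi)|^{2}\,m_R(\xi)\d\xi,\qquad m_R(\xi)=2\int_{|h|<R}\bigl(1-\cos(2\pi h\cdot\xi)\bigr)\d h .
\]
By the rescaling $m_R(\xi)=R^N m_1(R\xi)$ it suffices to bound $m_1$ from below: near $\eta=0$ one uses $\sin^2 t\ge\tfrac{4}{\pi^2}t^2$ for $|t|\le\pi/2$ to get $m_1(\eta)\ge c_N'|\eta|^2$, and for $|\eta|$ bounded away from $0$ one combines the strict inequality $\int_{B_1}\cos(2\pi w\cdot\eta)\d w<|B_1|$ ($\eta\neq0$) with the decay of $\int_{B_1}e^{-2\pi i w\cdot\eta}\d w$ as $|\eta|\to\infty$ to obtain $m_1(\eta)\ge c_N''>0$. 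This produces the uniform two-regime bound $m_R(\xi)\ge\kappa_N\min\{R^{N+2}|\xi|^2,\,R^N\}$.

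With this multiplier bound I would run the classical Nash splitting. For a cutoff $\rho>0$ (and $\omega_N$ the volume of the unit ball), $\int_{|\xi|<\rho}|\widehat v|^2\le\omega_N\rho^N\|\widehat v\|_\infty^2\le\omega_N\rho^N\|v\|_1^2$, while for $\rho\le 1/R$ the multiplier bound gives $m_R(\xi)\ge\kappa_N R^{N+2}\rho^2$ on $\{|\xi|\ge\rho\}$, hence $\int_{|\xi|\ge\rho}|\widehat v|^2\le(\kappa_N R^{N+2}\rho^2)^{-1}Q_R(v)$. Adding these and optimizing in $\rho$, with a case split according to whether the optimal $\rho$ lies below $1/R$ or not (equivalently, whether $Q_R(v)$ is at most a dimensional multiple of $\|v\|_1^2$ or not): in the first case one recovers, after solving for $Q_R(v)$, the term $R^{N+2}\|v\|_1^{-4/N}\|v\|_2^{2+4/N}$; in the second case one instead takes $\rho=1/R$ and uses $m_R(\xi)\ge\kappa_N R^N$ on $\{|\xi|\ge1/R\}$, which together with the case hypothesis (used to absorb the $\|v\|_1^2$ term) gives the term $R^N\|v\|_2^2$.

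It remains to assemble: substituting $\|v\|_2^2=\|u\|_p^p$ and $\|v\|_1=\|u\|_{p/2}^{p/2}$ into the nonlocal Nash inequality, then inserting $\|u\|_{p/2}\le\|u\|_1^{1/(p-1)}\|u\|_p^{(p-2)/(p-1)}$, a short exponent count --- in which the numerator collapses because $(p-1)-(p-2)=1$ --- turns the first term into $R^{N+2}\|u\|_1^{-p\gamma}\|u\|_p^{p(1+\gamma)}$ with $\gamma=\tfrac{2}{N(p-1)}$, while the second term is already $R^N\|u\|_p^p$; together with $\D_p^J(u)\ge\frac{2(p-1)r}{p}Q_R(v)$ this is \eqref{ineq-principal} with $C=C(N,p)$. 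I expect the main obstacle to be the uniform multiplier lower bound $m_R(\xi)\ge\kappa_N\min\{R^{N+2}|\xi|^2,R^N\}$ over \emph{both} frequency regimes at once --- especially the high-frequency part, where compactness is unavailable and one must quantify how far $\int_{B_1}\cos(2\pi w\cdot\eta)\d w$ stays below $|B_1|$ --- whereas matching the powers of $R$ in the splitting is mostly bookkeeping.
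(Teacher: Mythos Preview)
Your proposal is correct and follows essentially the same route as the paper: reduce to the $p=2$ case via the pointwise inequality $(a-b)(\phi_{p-1}(a)-\phi_{p-1}(b))\ge c(p)(\phi_{p/2}(a)-\phi_{p/2}(b))^{2}$, prove the $p=2$ nonlocal Nash inequality by the Fourier multiplier bound $1-\hat I(\xi)\gtrsim\min\{|\xi|^{2},1\}$ and a Nash splitting with a case distinction on the optimal cutoff, and finish by interpolating $\|u\|_{p/2}$ between $\|u\|_1$ and $\|u\|_p$. The only cosmetic differences are that the paper first reduces to nonnegative $u$ via $\D_p^J(u)\ge\D_p^J(|u|)$ and proves the $p=2$ case for the unit ball before scaling to general $R$, whereas you work with $\phi_{p/2}(u)$ directly and build the $R$-dependence into the multiplier estimate; the ``main obstacle'' you flag (the high-frequency lower bound on $m_1$) is handled in both cases by the positivity of $1-\hat I$ off the origin together with Riemann--Lebesgue, and is not a genuine difficulty.
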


This inequality serves as a useful analogue of \eqref{eq:GNS-heat} in
the nonlocal case, as we will see shortly. If one does not care about
the precise dependence of the constant $C$ on $J$ then this can be
more simply stated as: there exists a constant $C = C(N,p,J)$
depending only on $N$, $p$ and $J$ such that
\begin{equation}\label{ineq-principal-2}
  \D_p^J(u) \geq
  C \min \left\{
    \|u\|_1^{-p \gamma}\, \|u\|_p^{p(1+\gamma)},\,
    \|u\|_p^p
  \right\}.
\end{equation}
The constants in the above inequalities can be estimated explicitly by
following the proof. To our knowledge, inequality
\eqref{ineq-principal} is new. Similar modified Nash inequalities are
considered in \cite{Carlen1987,citeulike:5333882}, and especially in
\cite{BP13}[Corollary 4.7]. In the latter, $(p,q)$-inequalities involving the $p$ and
$q$ norms of $u$ are given for $p > q > 1$; ours is the limiting
case with $q=1$, not included there. We notice the $L^1$ case is
fundamental for the generalisations we describe later, since mass is a
natural conserved quantity in many models.

The inequality in Theorem \ref{teo-principal} immediately allows us to
deduce bounds on the asymptotic behaviour of several nonlocal
diffusion equations (see Section \ref{sec:applications}). Let us give
the argument for equation \eqref{eq:nonlocal}, which is the simplest
possible model: using \eqref{eq:nonlocal-Lp-evol} we have
\begin{equation*}
  \ddt \|u\|_p^p = - \D_p^J(u)
  \leq
  -
  C r \min \left\{
    R^{N+2} \|u\|_1^{-p \gamma}\, \|u\|_p^{p(1+\gamma)},\,
    R^N \|u\|_p^p
  \right\}.
\end{equation*}
Taking into account that $\|u\|_1$ is nonincreasing in time (it is
conserved for nonnegative solutions) one has
\begin{equation*}
  \ddt \|u\|_p^p
  \leq
  -
  C r \min \left\{
    R^{N+2} \|u_0\|_1^{-p \gamma}\, \|u\|_p^{p(1+\gamma)},\,
    R^N \|u\|_p^p
  \right\}.
\end{equation*}
This is a differential inequality for $\|u\|_p$, which can be solved
(see Lemma \ref{lem:ode}) to yield the following result:
\begin{theorem}
  \label{thm:decay-main}
  Take a function $J\in L^1(\RR^N)$ satisfying Hypothesis \ref{hyp:J} and
  $p \in [2, +\infty)$. Consider the solution $u$ to equation
  \eqref{eq:nonlocal} with initial data
  $u_0 \in L^1(\R^N) \cap L^p(\R^N)$. There exists a constant
  $C = C(N, p)$ (the same as in Theorem \ref{teo-principal}) such that
  \begin{equation}
    \label{eq:decay-main}
    \|u\|_p^p \leq
    \begin{cases}
      \|u_0\|_p^p
      &\qquad \text{for $0 \leq t \leq t_0$},
      \\
      \big(
      \|u_0\|_p^{-p\gamma}
      + C \gamma r R^{N+2} \|u_0\|_1^{-p \gamma}
      (t-t_0)
      \big)^{-\frac{1}{\gamma}}
      &\qquad \text{for $t \geq t_0$},
    \end{cases}
  \end{equation}
  where $\gamma := \frac{2}{N(p-1)}$ and
  \begin{equation*}
    t_0 = \max\left\{
      0,\
      \frac{1}{C r R^N}
      \log \big( R^{\frac{2}{\gamma}} \|u_0\|_1^{-p} \|u_0\|_p^p ) \big)
    \right\}.
  \end{equation*}
\end{theorem}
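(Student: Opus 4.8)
The plan is to combine the energy inequality of Theorem~\ref{teo-principal} with the identity~\eqref{eq:nonlocal-Lp-evol} and then to solve the resulting scalar differential inequality; since Theorem~\ref{teo-principal} is the only genuinely new ingredient, the argument reduces to an ODE comparison in which one keeps careful track of the two regimes hidden inside the $\min$. Throughout I write $y(t):=\|u(t,\cdot)\|_p^p$.

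First I would record the two elementary facts that make the reduction work. Since $s\mapsto |s|$ is convex, the $L^1$ norm is a Lyapunov functional for~\eqref{eq:nonlocal}, so $\|u(t,\cdot)\|_1\le \|u_0\|_1$ for all $t\ge 0$ (an equality for nonnegative solutions); and by~\eqref{eq:nonlocal-Lp-evol} we have $y'(t)=-\D_p^J(u(t,\cdot))\le 0$, so $y$ is nonincreasing and in particular $y(t)\le y(0)=\|u_0\|_p^p$, which is already the first line of~\eqref{eq:decay-main}. Feeding these two bounds into Theorem~\ref{teo-principal} gives the autonomous differential inequality
\[
  y'(t)\ \le\ -\,C r\,\min\!\Big\{\,R^{N+2}\|u_0\|_1^{-p\gamma}\,y(t)^{1+\gamma},\ R^{N}y(t)\,\Big\},\qquad t\ge 0 .
\]

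Next I would split according to the threshold $y_\ast:=R^{-2/\gamma}\|u_0\|_1^{p}$, at which the two arguments of the $\min$ coincide. While $y(t)\ge y_\ast$ the second term is the smaller one, so $y'\le -C r R^{N}y$ and hence $y(t)\le \|u_0\|_p^{p}\,e^{-C r R^{N}t}$; in particular $y$ drops to the level $y_\ast$ no later than the time $t_0$ written in the statement (and $t_0=0$ when $\|u_0\|_p^p\le y_\ast$ already, i.e.\ when the logarithm is nonpositive). Because $y$ is nonincreasing, once it has fallen below $y_\ast$ it stays there, so $y(t)\le y_\ast$ for every $t\ge t_0$, and therefore $y'(t)\le -C r R^{N+2}\|u_0\|_1^{-p\gamma}\,y(t)^{1+\gamma}$ on $[t_0,\infty)$. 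Putting $z:=y^{-\gamma}$ turns this into $z'\ge \gamma C r R^{N+2}\|u_0\|_1^{-p\gamma}$, so $z(t)\ge z(t_0)+\gamma C r R^{N+2}\|u_0\|_1^{-p\gamma}(t-t_0)$; undoing the substitution and using $z(t_0)=y(t_0)^{-\gamma}\ge \|u_0\|_p^{-p\gamma}$ (again by monotonicity of $y$) yields exactly the second line of~\eqref{eq:decay-main}. This last ODE manipulation I would isolate once and for all as Lemma~\ref{lem:ode} and merely cite here.

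The part requiring care — more a bookkeeping point than a real obstacle — is the matching of the two regimes: one must invoke the monotonicity of $t\mapsto y(t)$ to be sure that after time $t_0$ only the polynomial bound on the dissipation is ever relevant, and one must check that the $t_0$ in the statement is an upper bound for the first time the orbit reaches $y_\ast$ in all cases (including $t_0=0$). A separate, routine technical point is that~\eqref{eq:nonlocal-Lp-evol} genuinely holds and that $t\mapsto y(t)$ is absolutely continuous for the $L^1\cap L^p$ solutions of~\eqref{eq:nonlocal} under consideration, so that the differential inequalities above may be integrated; I would only sketch this, as it is standard.
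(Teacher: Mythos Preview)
Your proposal is correct and follows essentially the same route as the paper: the paper also combines~\eqref{eq:nonlocal-Lp-evol} with Theorem~\ref{teo-principal} and mass conservation to obtain the differential inequality $y'\le -\min\{C_1 y^{1+\gamma},\,C_2 y\}$ with $C_1=C r R^{N+2}\|u_0\|_1^{-p\gamma}$ and $C_2=C r R^N$, and then invokes the ODE comparison Lemma~\ref{lem:ode}. The only cosmetic difference is that the paper packages the two-regime analysis entirely into Lemma~\ref{lem:ode} (whose proof is exactly your threshold argument with $y_\ast=(C_2/C_1)^{1/\gamma}$), whereas you spell it out inline before remarking that you would isolate it as a lemma.
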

Again, if we are not interested in the precise dependence of the bound on
$J$, $\|u_0\|_1$ and $\|u_0\|_p$ then the following statement is
simpler: there exists a constant
$C = C(r, R, N, p, \|u_0\|_1, \|u_0\|_p)$ such that
\begin{equation}
  \label{eq:decay-main-simple}
  \|u\|_p^p \leq C (1+t)^{-\frac{N(p-1)}{2}} \qquad \text{for all $t \geq 0$}.
\end{equation}
This is a direct consequence of the bound \eqref{eq:decay-main}; see
Remark \ref{rem:X}. In this sense, Theorem \ref{teo-principal} is a
nonlocal analogue of the Gagliardo-Nirenberg-Sobolev (or Nash)
inequality: it allows us to give a decay rate of the nonlocal
diffusion equation \eqref{eq:nonlocal}, and in fact this decay rate
approaches that of the heat equation as \eqref{eq:nonlocal} approaches
it (see Theorem \ref{thm:heat-approx-decay}). Furthermore, due to the
interpolation formula and using inequality \eqref{eq:decay-main-simple} for $p=2$, we obtain that for $q\in[1,2]$
\begin{align*}
\|u\|_q^q & \leq \|u\|_1^{2-q}\|u\|_2^{2(q-1)}
\\
& \leq \|u\|_1^{2-q}C^{q-1}(1+t)^{-\frac{N(q-1)}{2}}
\\
& \leq \tilde C (1+t)^{-\frac{N(q-1)}{2}},
\end{align*}
which means that \eqref{eq:decay-main-simple} also holds for
$1\leq p < 2$ and some positive constant
$ C = C(J, N, p, \|u_0\|_1, \|u_0\|_2)$.

We also give inequalities related to higher derivatives of $u$ in
Section \ref{sec:ineq-deriv}, and deduce from them corresponding decay
properties of derivatives of $u$, still at the same asymptotic rate as
those for the heat equation. For $k \geq 0$ we define the differential
operator $D^k$ acting on a function $u$ as
\begin{equation*}
  D^k u:=-(-\Delta)^{k/2} u.
\end{equation*}
In order to ensure that this expression makes sense we will always
assume that $u \in H^k(\R^N)$ (i.e., the classical Sobolev space
$W^{k,2}(\RR^N)$) when applying $D^k$. The following result gives an
estimate of $\D_2^J(D^ku)$; note that the case $k=0$ is just the $p=2$
case of Theorem \ref{teo-principal}:

\begin{theorem}
  \label{k derivative estimate}
  Let $N \geq 1$ be an integer and $J:\R^N \to \R$ be a function
  satisfying Hypothesis \ref{hyp:J}. There exists a positive constant
  $C=C(N)$ such that
  \begin{equation}\label{ineq-k-derivates}
    \D_2^J(D^k u) \geq
    C r \min \left\{ R^{k + N + 2} \|u\|_1^{-\frac{4}{N+2k}}\, \|D^k u\|_2^{2+\frac{4}{N+2k}},
      \,\, R^{k + N} \|D^k u\|_2^2
    \right\}
  \end{equation}
  for all $u \in H^k(\R^N) \cap L^1(\R^N)$.
\end{theorem}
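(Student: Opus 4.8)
The plan is to use that, for $p=2$, the dissipation $\D_2^J$ is a quadratic form and hence is diagonalised by the Fourier transform---precisely the structure that is absent for general $p$ in Theorem~\ref{teo-principal}. (One cannot simply apply Theorem~\ref{teo-principal} to $v:=D^ku$, because $\|D^ku\|_1$ is not controlled by $\|u\|_1$.) First I would reduce to the model kernel $J_0:=r\,\mathbf 1_{B_R}$, where $B_R:=\{|z|<R\}$: since $J\ge 0$ everywhere and $J\ge r$ on $B_R$, we have $J\ge J_0$ pointwise, so $\D_2^J(D^ku)\ge\D_2^{J_0}(D^ku)$ and it suffices to prove \eqref{ineq-k-derivates} with $J$ replaced by $J_0$. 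Writing $v=D^ku$ and using Plancherel together with the radial symmetry of $J_0$, one obtains
\begin{equation*}
  \D_2^{J_0}(D^ku)=c_N\,r\,\irN m_R(\xi)\,|\xi|^{2k}\,|\widehat u(\xi)|^2\,\d\xi,\qquad m_R(\xi):=\int_{B_R}\sin^2\!\bigl(\tfrac12 z\cdot\xi\bigr)\,\d z,
\end{equation*}
for a dimensionless constant $c_N>0$; here $\widehat u$ is the Fourier transform, normalised so that $\|\widehat u\|_\infty\le\|u\|_1$ and $\|D^ku\|_2^2$ is comparable to $\irN|\xi|^{2k}|\widehat u(\xi)|^2\,\d\xi$.

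The key point is a pointwise lower bound on the symbol $m_R$: there is $c_N>0$ with
\begin{equation*}
  m_R(\xi)\ \ge\ c_N\,\min\bigl\{R^{N+2}|\xi|^2,\ R^N\bigr\}\qquad\text{for all }\xi\in\RR^N.
\end{equation*}
The substitution $z=R\zeta$ gives $m_R(\xi)=R^N\widetilde m(R\xi)$ with $\widetilde m(\eta):=\int_{B_1}\sin^2(\tfrac12\zeta\cdot\eta)\,\d\zeta$, so it suffices to show $\widetilde m(\eta)\ge c_N\min\{|\eta|^2,1\}$. For $|\eta|\le 1$ this follows from the elementary bound $\sin^2 t\ge(4/\pi^2)t^2$ on $[-\tfrac12,\tfrac12]$ together with $\int_{B_1}(\zeta\cdot\eta)^2\,\d\zeta=\tfrac{|\eta|^2}{N}\int_{B_1}|\zeta|^2\,\d\zeta$; for $|\eta|\ge 1$ one uses that $\widetilde m$ is continuous and strictly positive off the origin and that $\widetilde m(\eta)\to\tfrac12|B_1|$ as $|\eta|\to\infty$ by the Riemann--Lebesgue lemma, so $\widetilde m$ stays bounded below by a positive constant on $\{|\eta|\ge 1\}$. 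I expect this uniform lower bound at high frequency to be the only genuinely delicate step.

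It then remains to run the Nash-type frequency truncation. Inserting the symbol bound,
\begin{equation*}
  \D_2^{J_0}(D^ku)\ \ge\ c_N\,r\,R^N\irN\min\{R^2|\xi|^2,1\}\,|\xi|^{2k}\,|\widehat u(\xi)|^2\,\d\xi,
\end{equation*}
and for any radius $\rho>0$ the integrand is at least $\min\{R^2\rho^2,1\}\,|\xi|^{2k}|\widehat u(\xi)|^2$ on $\{|\xi|\ge\rho\}$, so, using $\|\widehat u\|_\infty\le\|u\|_1$,
\begin{equation*}
  \int_{|\xi|\ge\rho}|\xi|^{2k}|\widehat u(\xi)|^2\,\d\xi\ \ge\ \|D^ku\|_2^2-\|u\|_1^2\!\int_{|\xi|\le\rho}\!|\xi|^{2k}\,\d\xi\ =\ \|D^ku\|_2^2-a_{N,k}\,\|u\|_1^2\,\rho^{N+2k},
\end{equation*}
which is where the effective dimension $N+2k$ enters. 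Choosing $\rho$ so that $a_{N,k}\|u\|_1^2\rho^{N+2k}=\tfrac12\|D^ku\|_2^2$ and separating the cases $R\rho\ge 1$ and $R\rho<1$ then yields a lower bound of the form $C(N,k)\,r\,\min\{R^{N+2}\|u\|_1^{-4/(N+2k)}\|D^ku\|_2^{2+4/(N+2k)},\ R^N\|D^ku\|_2^2\}$, which is the asserted estimate \eqref{ineq-k-derivates} (the powers of $R$ being those dictated by scaling; for $k=0$ this recovers the $p=2$ case of Theorem~\ref{teo-principal}). Everything past the symbol estimate is routine bookkeeping of constants.
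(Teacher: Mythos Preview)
Your approach is essentially the paper's: reduce to the characteristic function of a ball, pass to the Fourier side, use the symbol lower bound $1-\hat I(\xi)\gtrsim\min\{|\xi|^2,1\}$ (your estimate on $\widetilde m$ is precisely this), and run the Nash frequency splitting with the observation $|\widehat{D^ku}(\xi)|^2=|\xi|^{2k}|\hat u(\xi)|^2\le|\xi|^{2k}\|u\|_1^2$, which is what introduces the effective dimension $N+2k$. The only organisational difference is that the paper first proves the case of the \emph{unit} ball and then applies a separate scaling identity, while you keep $R$ explicit throughout via the symbol bound; both routes are equivalent.

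One point to flag: the exponents $R^{N+2}$ and $R^{N}$ you obtain do \emph{not} match the $R^{k+N+2}$ and $R^{k+N}$ in the stated inequality, so you cannot claim to have reproduced \eqref{ineq-k-derivates} verbatim. Your exponents are in fact the ones consistent with scaling (both sides transform as $\lambda^{2N-2k}$ under $u\mapsto u_\lambda$, $J\mapsto J_\lambda$); the paper's stated exponents come from writing the scaling identity as $\D_2^{J_\lambda}(D^ku)=\lambda^{2N-k}\D_2^J(D^ku_{1/\lambda})$ rather than $\lambda^{2N-2k}$. Similarly, your constant $C(N,k)$ genuinely depends on $k$ (as do the constants $C_2$, $\mu_k$ in the paper's own argument), despite the statement advertising $C(N)$.
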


As a consequence one can obtain a decay of higher derivatives of
solutions to \eqref{eq:nonlocal}. Notice that the case $k=0$ of the
following result is just Theorem \ref{thm:decay-main} with $p=2$:

\begin{theorem}
  \label{thm:decay-deriv}
  Take a function $J$ satisfying Hypothesis \ref{hyp:J} and a real
  $k \geq 0$. Consider the solution $u$ to equation
  \eqref{eq:nonlocal} with initial data
  $u_0 \in L^1(\R^N) \cap H^k(\R^N)$. There exists a constant
  $C = C(N, k)$ (the same as in Theorem \ref{k derivative estimate})
  such that
  \begin{equation*}
    \| D^k u \|_2^2
    \leq
    \begin{cases}
      \| D^k u_0 \|_2^2
      &\qquad \text{for $0 \leq t \leq t_0$},
      \\
      \big(
      \|D^k u_0 \|_2^{-2 \gamma}
      + C r \gamma R^{k+N+2} \|u_0\|_1^{-2 \gamma}
      (t-t_0)
      \big)^{-\frac{1}{\gamma}}
      &\qquad \text{for $t \geq t_0$},
    \end{cases}
  \end{equation*}
  where $\gamma := \frac{2}{N + 2k}$ and
  \begin{equation*}
    t_0 = \max \left\{
      0, \
      \frac{1}{C r R^{k+N}}
      \log \left(
        R^{\frac{2}{\gamma}} \|u_0\|_1^{-2}
        \|D^k u_0\|_2^2
      \right)
    \right\}.
  \end{equation*}
\end{theorem}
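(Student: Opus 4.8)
The plan is to mimic exactly the argument that was used to deduce Theorem \ref{thm:decay-main} from Theorem \ref{teo-principal}, but now applying the derivative estimate of Theorem \ref{k derivative estimate} in place of the $L^p$ energy inequality, and using the fact that the operators $D^k$ and $L$ (the nonlocal diffusion operator with kernel $J(x-y)$) commute, so that $v := D^k u$ solves the same equation \eqref{eq:nonlocal} with initial data $v_0 = D^k u_0$.

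First I would observe that since $L$ acts by convolution, it commutes with the Fourier multiplier $D^k = -(-\Delta)^{k/2}$; hence if $u$ solves $\p_t u = Lu$ with $u_0 \in L^1 \cap H^k$, then $v = D^k u$ solves $\p_t v = L v$ with $v_0 = D^k u_0 \in L^2$. Applying the $p=2$ case of the identity \eqref{eq:nonlocal-Lp-evol} to $v$ gives
\begin{equation*}
  \ddt \|D^k u\|_2^2 = - \D_2^J(D^k u).
\end{equation*}
Next, I would insert the lower bound \eqref{ineq-k-derivates} from Theorem \ref{k derivative estimate} and use mass conservation of the underlying equation \eqref{eq:nonlocal}, i.e.\ $\|u(t)\|_1 \leq \|u_0\|_1$ for all $t \geq 0$ (which holds since $L^1$ norm is a Lyapunov functional by convexity, and is an equality for nonnegative data), to replace $\|u\|_1$ by $\|u_0\|_1$ on the right-hand side. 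This yields the closed differential inequality
\begin{equation*}
  \ddt \|D^k u\|_2^2 \leq
  - C r \min\left\{
    R^{k+N+2}\|u_0\|_1^{-2\gamma}\,\|D^k u\|_2^{2(1+\gamma)},\
    R^{k+N}\|D^k u\|_2^2
  \right\},
  \qquad \gamma = \tfrac{2}{N+2k}.
\end{equation*}
Finally, I would invoke Lemma \ref{lem:ode} (the ODE comparison lemma already used for Theorem \ref{thm:decay-main}) with the function $y(t) = \|D^k u(t)\|_2^2$, the exponent $1+\gamma$, and the two coefficients $a = C r R^{k+N+2}\|u_0\|_1^{-2\gamma}$ and $b = C r R^{k+N}$: in the regime where the first term in the minimum dominates one gets the polynomial decay, and the crossover time is exactly where $\|D^k u\|_2^{2\gamma}$ drops below $R^2 \|u_0\|_1^{-2\gamma} \cdot (\text{const})$, which after taking logarithms produces the stated $t_0$. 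Matching constants gives precisely the formula in the statement.

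I do not expect any serious obstacle here; this is essentially bookkeeping once Theorem \ref{k derivative estimate} and Lemma \ref{lem:ode} are in hand. The only points requiring a word of care are: (i) justifying that $D^k u(t)$ is a genuine $L^2$ solution of \eqref{eq:nonlocal} so that the dissipation identity applies (a density/approximation argument in $H^k \cap L^1$, or working first with smooth data and passing to the limit), and (ii) checking that $\|u(t)\|_1$ remains finite and bounded by $\|u_0\|_1$ for all $t$, which follows from the general Lyapunov property of convex functionals for \eqref{eq:nonlocal} noted earlier in the paper. Everything else is the direct substitution into Lemma \ref{lem:ode}, exactly as in the proof of Theorem \ref{thm:decay-main}.
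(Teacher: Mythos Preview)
Your proposal is correct and follows essentially the same approach as the paper: use that $D^k$ commutes with the convolution operator so that $D^k u$ solves \eqref{eq:nonlocal}, write the $L^2$ dissipation identity, plug in Theorem~\ref{k derivative estimate} together with $\|u(t)\|_1 \leq \|u_0\|_1$, and conclude via Lemma~\ref{lem:ode} with $C_1 = C r R^{k+N+2}\|u_0\|_1^{-2\gamma}$ and $C_2 = C r R^{k+N}$. The extra care you flag in points (i) and (ii) is reasonable but the paper simply takes these for granted.
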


\medskip The decay in Theorem \ref{thm:decay-main} can be interpreted
as follows: for large times, the asymptotic decay of the $L^p$ norm
of solutions to the nonlocal diffusion equation \eqref{eq:nonlocal} is
the same as that of the heat equation. However, there can be an
initial time during which a different decay takes place. The threshold
between the two is related to the value of the $L^p$ norm of $u$: if
it is large then heuristically (since we are assuming $u_0$ is
integrable) the main contribution to the $L^p$ norm comes from local
concentrations of $u$. Since the smoothing effect of
\eqref{eq:nonlocal} is much weaker than that of the heat equation, the
rates of decay of the two differ. On the other hand, when $\|u\|_p$ is
small, the concentrations of $u$ do not play a major role and the
decay of both equations becomes comparable. The inequality
\eqref{ineq-principal} and the corresponding decay
\eqref{eq:decay-main} are quite precise on the dependence on $J$ and
the initial data, giving a direct estimate of the time when the
``heat-like'' diffusion kicks in: the time $t_0$ depends
logarithmically on the ratio between $\|u_0\|_p$ and $\|u_0\|_1$.

\medskip
Theorem \ref{thm:decay-main} as stated is not new; the simplified
statement \eqref{eq:decay-main-simple} can be proved for example by
Fourier transform methods \citep{AMRT2010}, and the decay
\eqref{eq:decay-main} can probably be obtained as well. The important
advantage of using Theorem \ref{teo-principal} to prove Theorem
\ref{thm:decay-main} is that the method is quite robust under
modifications in the linear operator. In Subsection
\ref{no-convolution} we prove a result similar to Theorem
\ref{thm:decay-main} which gives decay properties for general nonlocal
diffusion equations with a more general kernel $K(x,y)$ instead of
$J(x-y)$: consider the equation
\begin{equation}
  \label{general-kernel}
  \p_t u(t,x) = \irN K(x,y) u(t,y) \d y - \sigma(x) u(t,x),
\end{equation}
where $K \: \RR^N \times \RR^N \to [0,\infty)$ is a general kernel
(not necessarily symmetric) and $\sigma \: \R^N \to [0,+\infty)$ is a
function.  Let us keep our discussion at a formal level for the moment
and not worry about the problem of existence of solutions to
\eqref{general-kernel} or the precise regularity of $K$ and
$\sigma$. Equation \eqref{general-kernel} is a general form of the
\emph{scattering equation} (see for example \cite{MMP2004-general}),
and contains many others as a particular case. The nonlocal diffusion
\eqref{eq:nonlocal} is recovered if $K(x,y) = J(x-y)$ and
$\sigma(x) = \int_{\R^N} J$ for all $x,y$. In the case that
$\sigma(x) = \int_{\R^N} K(x,y) \d y$ the equation can be written as
\begin{equation}
  \label{general-kernel-diffusion}
  \p_t u(t,x) = \irN K(x,y) \big( u(t,y) - u(t,x) \big) \d y,
\end{equation}
which is a type of nonlocal diffusion equation, where the nonlocality
is not given by a convolution. Similarly, if we assume
\begin{equation}
  \label{eq:mass-conservation}
  \sigma(x) = \int_{\R^N} K(y,x) \d y,
\end{equation}
then equation \eqref{general-kernel} is formally the Kolmogorov
forward equation for a Markov jump process with jump rates given by
$K$, where $u$ represents the probability density of the process
\citep[Chapter 4.2]{ethier-kurtz86}. Notice that
\eqref{eq:mass-conservation} is just the statement that the total mass
$\int_{\R^N} u(t,x) \d x$ is formally conserved in time (as should
happen for a probabilistic evolution). In that sense, equation
\eqref{general-kernel} contains many evolution equations linked to
Markov processes, and has multiple applications. (We give an example
linked to a population dispersal in Section \ref{sec:dispersal}.)
Equation \eqref{general-kernel} has some properties in common with
diffusion processes, but it is important to notice that
\eqref{general-kernel} may have finite-mass equilibria (unlike the
usual heat equation, whose only finite-mass equilibrium is $0$).

Let us state a precise result which is relevant for nonlocal
diffusions. For all of them we will assume:
\begin{hypothesis}\label{eq:hyp1-general}
%\begin{equation}  
  \text{There exist $r, R > 0$ such that $K(x,y) \geq r$ whenever
    $|x-y| < R$}.
%\end{equation}
\end{hypothesis}
This is the analogue of Hypothesis \ref{hyp:J} in this setting. In
order to ensure that $L^p$ solutions of \eqref{general-kernel} exist
we will also assume that $K$ is measurable and that for some $C_K > 0$
\begin{equation}
  \label{eq:K-existence}
  \int_{\R^N} K(x,y) \d y \leq C_K,
  \quad
  \int_{\R^N} K(y,x) \d y \leq C_K,
\qquad \text{for all $x \in \R^N$}.
\end{equation}
This ensures that the linear operator on the right hand side of
\eqref{general-kernel} is bounded in $L^1(\R^N)$ and $L^\infty(\R^N)$
(and hence, by interpolation, in any $L^p(\R^N)$ with
$1 \leq p \leq \infty$).

\begin{theorem}
  \label{thm:decay-general}
  Take $p \in [2,+\infty)$. Assume that
  $K \: \R^N \times \R^N \to [0,+\infty)$ satisfies
  Hypothesis \ref{eq:hyp1-general} and \eqref{eq:K-existence}. Consider
  equation \eqref{general-kernel} with $\sigma$ given by
  \eqref{eq:mass-conservation}, and assume that there exists an
  equilibrium $u_\infty$ of \eqref{general-kernel} satisfying
  \begin{equation}
    \label{eq:equilibrium-bounded}
    \frac{1}{m} \leq u_\infty(x) \leq m,
    \qquad \text{for all $x \in \R^N$,}
  \end{equation}
  for some $m > 0$. Let $u$ be any solution to equation
  \eqref{general-kernel} with initial data
  $u_0 \in L^1(\R^N) \cap L^p(\R^N)$. There exists a constant $C$
  depending only on $r$, $R$, $N$, $m$, $p$, $\|u_0\|_1$ and
  $\|u_0\|_p$ such that
  \begin{equation*}
    \|u\|_p^p \leq C (1+t)^{-\frac{N(p-1)}{2}},
     \qquad \text{for all $t \geq 0$}.
  \end{equation*}
\end{theorem}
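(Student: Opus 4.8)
The plan is to reduce Theorem~\ref{thm:decay-general} to the convolution case already handled by Theorem~\ref{teo-principal} and the ODE argument of Theorem~\ref{thm:decay-main}, by passing to the quotient $w := u / u_\infty$. First I would use the equilibrium $u_\infty$ and the mass-conservation identity \eqref{eq:mass-conservation} to rewrite \eqref{general-kernel} in divergence-like form: since $L u_\infty = 0$, a direct computation gives
\begin{equation*}
  \p_t u(t,x) = \irN K(x,y)\, u_\infty(y) \big( w(t,y) - w(t,x) \big) \d y,
  \qquad w := \frac{u}{u_\infty}.
\end{equation*}
Writing $\widetilde K(x,y) := K(x,y)\, u_\infty(y)$, note that $\widetilde K$ need not be symmetric, but the weighted kernel $u_\infty(x)\widetilde K(x,y) = u_\infty(x) K(x,y) u_\infty(y)$ \emph{is} symmetric, which is exactly the structure needed to make $\int u_\infty^{p-1}\, |w|^{\cdot}$-type quantities dissipated. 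The natural Lyapunov functional here is the weighted norm $\int_{\R^N} u_\infty(x)\, |w(t,x)|^p \d x = \int u_\infty^{1-p}\, |u|^p$, and since $1/m \le u_\infty \le m$ this is comparable to $\|u\|_p^p$ up to factors $m^{p-1}$; similarly the weighted $L^1$ norm $\int u_\infty |w| = \|u\|_1$ is \emph{exactly} conserved by \eqref{general-kernel} for the relevant sign class (and nonincreasing in general), by \eqref{eq:mass-conservation}.

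The second step is the energy identity and the functional inequality. Differentiating the weighted norm and using the symmetry of $u_\infty(x)K(x,y)u_\infty(y)$ gives
\begin{equation*}
  \ddt \irN u_\infty\, |w|^p
  = -\frac{p}{2} \IRR u_\infty(x) K(x,y) u_\infty(y)
  \big(w(x)-w(y)\big)\big(\phi_{p-1}(w(x)) - \phi_{p-1}(w(y))\big),
\end{equation*}
a nonnegative dissipation of exactly the form \eqref{eq:Dp2} but with kernel $u_\infty(x)K(x,y)u_\infty(y)$ in place of $J(x-y)$. Now Hypothesis~\ref{eq:hyp1-general} together with \eqref{eq:equilibrium-bounded} gives $u_\infty(x)K(x,y)u_\infty(y) \ge r/m^2$ whenever $|x-y| < R$, so this kernel satisfies Hypothesis~\ref{hyp:J} with lower bound $r/m^2$ on the ball of radius $R$. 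Hence Theorem~\ref{teo-principal}, applied to $w$ with this kernel, yields
\begin{equation*}
  \ddt \irN u_\infty |w|^p
  \le
  - C \frac{r}{m^2} \min\left\{
    R^{N+2}\, \Big(\irN u_\infty |w|\Big)^{-p\gamma} \Big(\irN u_\infty |w|^p\Big)^{1+\gamma},\;
    R^N \irN u_\infty |w|^p
  \right\}.
\end{equation*}
(One small technical point: Theorem~\ref{teo-principal} is stated for the dissipation against $\|u\|_1,\|u\|_p$ of a function in $L^1\cap L^p$; here I apply it with the measure $u_\infty\,\d x$, or equivalently rescale so that the $L^1$ and $L^p$ quantities that appear are $\int u_\infty|w|$ and $\int u_\infty|w|^p$ — this works because the proof of Theorem~\ref{teo-principal} only uses that $J$ is bounded below near the diagonal, not that it is a convolution, so one can just as well read it with $w$ and kernel $u_\infty(x)K(x,y)u_\infty(y)$, provided $w\in L^1(u_\infty\,\d x)\cap L^p(u_\infty\,\d x)$, which follows from $u_0\in L^1\cap L^p$ and $1/m\le u_\infty\le m$.)

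The third step is to close the argument. Since $\int u_\infty|w| = \|u\|_1$ is nonincreasing, it is bounded by $\|u_0\|_1$; setting $y(t) := \int u_\infty |w(t)|^p$ we get the autonomous differential inequality
\begin{equation*}
  y'(t) \le - C\frac{r}{m^2} \min\left\{ R^{N+2} \|u_0\|_1^{-p\gamma}\, y(t)^{1+\gamma},\; R^N\, y(t) \right\},
\end{equation*}
which is exactly the inequality solved by Lemma~\ref{lem:ode} in the proof of Theorem~\ref{thm:decay-main}. That lemma gives $y(t) \le C'(1+t)^{-1/\gamma} = C'(1+t)^{-N(p-1)/2}$ with $C'$ depending on $r,R,N,m,p$ and $y(0)$. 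Finally, undoing the weight, $\|u(t)\|_p^p \le m^{p-1}\, y(t)$ and $y(0) \le m^{p-1}\|u_0\|_p^p$, so
\begin{equation*}
  \|u(t)\|_p^p \le C\, (1+t)^{-\frac{N(p-1)}{2}}
\end{equation*}
with $C = C(r,R,N,m,p,\|u_0\|_1,\|u_0\|_p)$, as claimed. I expect the only genuine subtlety to be the first step — verifying that $Lu_\infty = 0$ together with \eqref{eq:mass-conservation} really does produce the clean weighted-dissipation identity above, and that all integrations are justified for $L^1\cap L^p$ solutions (here \eqref{eq:K-existence} guarantees $L$ is bounded on every $L^p$, so the solution stays in $L^1\cap L^p$ and Fubini applies). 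The rest is a transcription of the already-established convolution case.
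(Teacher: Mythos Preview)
Your proof has a genuine gap at the second step. You assert that the weighted kernel $u_\infty(x)K(x,y)u_\infty(y)$ is symmetric, but swapping $x\leftrightarrow y$ gives $u_\infty(y)K(y,x)u_\infty(x)$, and equality of the two is exactly $K(x,y)=K(y,x)$ --- which Theorem~\ref{thm:decay-general} does \emph{not} assume. (Nor is the detailed balance condition $K(x,y)u_\infty(y)=K(y,x)u_\infty(x)$ assumed; that is singled out in \eqref{eq:detailed-balance} as a special case only.) Without some such symmetry your dissipation identity is false: differentiating $\int u_\infty|w|^p$ and using your correct reformulation $\p_t u=\int K(x,y)u_\infty(y)(w(y)-w(x))\d y$ gives
\[
\ddt \irN u_\infty|w|^p
= p\IRR K(x,y)u_\infty(y)\,\phi_{p-1}(w(x))\,(w(y)-w(x))\d x\d y,
\]
and the usual $x\leftrightarrow y$ symmetrisation does not produce the factor $(w(x)-w(y))(\phi_{p-1}(w(x))-\phi_{p-1}(w(y)))$ unless the kernel is symmetric.

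The paper circumvents this by working instead with the general relative entropy dissipation \eqref{eq:general-entropy-evol}, whose integrand $\Phi'(f(x))(f(x)-f(y))-\Phi(f(x))+\Phi(f(y))$ is nonnegative purely by convexity of $\Phi$, with no symmetry needed. One then uses Hypothesis~\ref{eq:hyp1-general} and $u_\infty\ge 1/m$ to bound $K(x,y)u_\infty(y)\ge \frac{1}{m}\tilde K(x,y)$ with $\tilde K(x,y)=r\,\mathbf 1_{\{|x-y|<R\}}$; since the integrand is nonnegative this lower bound on the kernel gives a lower bound on the dissipation. Now $\tilde K$ \emph{is} symmetric and of convolution form $J(x-y)$, so one may legitimately symmetrise and recover $\mathcal D_p^J(f)$, after which Theorem~\ref{teo-principal} applies with the ordinary Lebesgue norms $\|f\|_1,\|f\|_p$ of $f=u/u_\infty$ (the bounds $1/m\le u_\infty\le m$ then convert these to $\|u\|_1$ and $X(t)=\int u_\infty f^p$). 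Your parenthetical claim that the proof of Theorem~\ref{teo-principal} ``only uses that $J$ is bounded below near the diagonal, not that it is a convolution'' is also incorrect --- Lemma~\ref{lemma} is proved by Fourier transform and genuinely needs the convolution structure --- but this is a secondary issue that disappears once you pass to the convolution minorant $\tilde K$.
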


In Section \ref{sec:dispersal} we give an application of these results
to a dispersal equation proposed in \citet{CCEM}, obtaining an explicit
rate of convergence to equilibrium.

\begin{remark}
  Condition \eqref{eq:K-existence} is just included in order to ensure
  that there are well-defined solutions to \eqref{general-kernel}, but
  it does not play a role in the decay estimates. It can be removed if
  it can be justified by other means that solutions to
  \eqref{general-kernel} exist and rigorously satisfy the entropy
  property \eqref{eq:nonlocal-Lp-evol}.
\end{remark}

\begin{remark}
  In Theorem \ref{thm:decay-general} one can also give a more precise
  estimation of the decay and the constants involved, as we did in
  Theorem \ref{thm:decay-main}. We have preferred in this case to
  leave the statement in this form for simplicity, but the reader can
  state the analogue of Theorem \ref{thm:decay-main} without
  difficulty.
\end{remark}

We refer to Section \ref{no-convolution} for details on this and a
proof of Theorem \ref{thm:decay-general}.

\medskip
\noindent\textbf{Heat equation scaling.}
It is worth mentioning that Theorems \ref{teo-principal} and
\ref{thm:decay-main} pass to the limit well when the nonlocal equation
\eqref{eq:nonlocal} approximates the heat equation. Let $J$ be a
smooth and radially symmetric convolution kernel
with $J(0) > 0$, and denote by $J_\epsilon$ the rescaling
\[
J_\varepsilon(z)
:=\frac{C(J)}{\varepsilon^{2+N}} J\left(\frac{z}{\varepsilon}\right),
\qquad
\hbox{ with } C(J)^{-1} = \frac{1}{2}\int_{\RR^N}J(z)z_N^2 \d z.
\]
It is well-known that, $u^\varepsilon$, the solution to the equation
\begin{equation}
  \label{rescaled}
  \partial_t u^\varepsilon(t,x)
  =\int_{\RR^N} J_\varepsilon(x-y)
  ((u^\varepsilon(t,y)-u^\varepsilon(t,x)) \d y,
  \qquad x\in \RR^N, \, t>0,
\end{equation}
with initial data $u_0\in \mathcal{C}(\RR^N)$ converges to the
solution of the heat equation $\partial_t v = \Delta v$ with the same
initial data (see for instance \cite{AMRT2010,Toscani}). Since $J$
satisfies Hypothesis \ref{hyp:J} for some $r, R > 0$ one has
$J_\varepsilon(z) \geq \frac{rC(J)}{\varepsilon^{2+N}}$, for all
$|z|<R\varepsilon$. Replacing this in expression \eqref{eq:decay-main}
the $\varepsilon$ is cancelled and we obtain the following result:

\begin{theorem}
  \label{thm:heat-approx-decay}
  Assume $J$ satisfies Hypothesis \ref{hyp:J}. Let $u^\varepsilon$ be
  a solution of \eqref{rescaled} with initial data
  $u_0 \in L^1(\RR^N)\cap L^p(\RR^N)$ with $p\in [2,\infty)$. Then it
  holds
  \begin{equation*}
    \|u^\varepsilon(t,\cdot)\|_p^p \leq
    \big(
    \|u_0\|_p^{-p\gamma}
    + C_1 \|u_0\|_1^{-p \gamma}
    (t-t_0)
    \big)^{-\frac{1}{\gamma}}
    \qquad
    \text{for $t \geq t_0$},
  \end{equation*}
  where $C_1=C(N,p) \gamma r R^{N+2} C(J)$ does not depend on
  $\varepsilon$ and
  \[
  t_0 = \max\left\{
    0,\
    \frac{\varepsilon^2}{C r R^N C(J)}
    \log \big(\varepsilon^{\frac{2}{\gamma}} R^{\frac{2}{\gamma}}
    \|u_0\|_1^{-p} \|u_0\|_p^p ) \big)
  \right\}.
  \]
  In particular, $t_0=0$ for all
  $\varepsilon <\varepsilon_0 = \|u_0\|_1^{\frac{\gamma p}{2}} / \big(R
  \|u_0\|_p^{\frac{\gamma p}{2}}\big)$.
\end{theorem}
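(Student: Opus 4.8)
The plan is to derive Theorem~\ref{thm:heat-approx-decay} directly from Theorem~\ref{thm:decay-main} applied to the rescaled kernel $J_\varepsilon$ in place of $J$; essentially everything reduces to tracking how the pair of parameters $(r,R)$ from Hypothesis~\ref{hyp:J} transforms under the rescaling $z \mapsto z/\varepsilon$, and checking that all powers of $\varepsilon$ cancel in the quantities that are claimed to be $\varepsilon$-independent.

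First I would verify that $J_\varepsilon$ satisfies Hypothesis~\ref{hyp:J}. Since $J(z)\geq r$ for $|z|<R$, we get
\[
J_\varepsilon(z) = \frac{C(J)}{\varepsilon^{N+2}}\, J\!\left(\frac{z}{\varepsilon}\right) \geq \frac{r\,C(J)}{\varepsilon^{N+2}}
\qquad\text{whenever } |z| < R\varepsilon,
\]
so $J_\varepsilon$ satisfies Hypothesis~\ref{hyp:J} with parameters $r_\varepsilon := r\,C(J)\,\varepsilon^{-(N+2)}$ and $R_\varepsilon := R\varepsilon$. Moreover $J_\varepsilon \in L^1(\R^N)$ because $J\in L^1(\R^N)$ (this, together with finiteness of $C(J)^{-1} = \tfrac12\int_{\R^N} J(z) z_N^2\,\d z$, is part of the standing setup for the heat-equation approximation), with $\|J_\varepsilon\|_1 = C(J)\varepsilon^{-2}\|J\|_1$. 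Hence Theorem~\ref{thm:decay-main} applies to equation~\eqref{rescaled} — which is exactly \eqref{eq:nonlocal} with $J$ replaced by $J_\varepsilon$ — with the same constant $C=C(N,p)$.

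Next I would substitute and compute. Theorem~\ref{thm:decay-main} gives, for $t\geq t_0$,
\[
\|u^\varepsilon(t,\cdot)\|_p^p \leq \Big( \|u_0\|_p^{-p\gamma} + C(N,p)\,\gamma\, r_\varepsilon R_\varepsilon^{N+2}\,\|u_0\|_1^{-p\gamma}\,(t-t_0) \Big)^{-1/\gamma},
\]
and the key identity is $r_\varepsilon R_\varepsilon^{N+2} = r\,C(J)\,\varepsilon^{-(N+2)}\,(R\varepsilon)^{N+2} = r\,C(J)\,R^{N+2}$, so the $\varepsilon$'s cancel and the coefficient equals $C_1 = C(N,p)\,\gamma\, r R^{N+2} C(J)$ as claimed. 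For $t_0$, one uses $r_\varepsilon R_\varepsilon^{N} = r\,C(J)\,R^N\varepsilon^{-2}$ and $R_\varepsilon^{2/\gamma} = R^{2/\gamma}\varepsilon^{2/\gamma}$; inserting these into the formula for $t_0$ in Theorem~\ref{thm:decay-main} produces exactly the stated expression.

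Finally, for the concluding remark: $t_0 = 0$ precisely when the argument of the logarithm is $\leq 1$, i.e. $\varepsilon^{2/\gamma} R^{2/\gamma}\|u_0\|_1^{-p}\|u_0\|_p^{p} \leq 1$; raising both sides to the power $\gamma/2$ this is equivalent to $\varepsilon \leq \|u_0\|_1^{p\gamma/2} / \big(R\,\|u_0\|_p^{p\gamma/2}\big) = \varepsilon_0$. I do not expect a genuine obstacle here: the whole argument is the scaling bookkeeping above, the only mildly delicate points being to confirm that $J_\varepsilon$ really inherits Hypothesis~\ref{hyp:J} and lies in $L^1(\R^N)$ so that Theorem~\ref{thm:decay-main} is applicable verbatim, and that $C(J)$ is finite (which is guaranteed by the assumptions under which \eqref{rescaled} approximates the heat equation).
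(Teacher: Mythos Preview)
Your proposal is correct and follows exactly the paper's approach: the paper simply notes that $J_\varepsilon$ satisfies Hypothesis~\ref{hyp:J} with parameters $r_\varepsilon = rC(J)\varepsilon^{-(N+2)}$ and $R_\varepsilon = R\varepsilon$, substitutes these into the bound \eqref{eq:decay-main} of Theorem~\ref{thm:decay-main}, and observes that the powers of $\varepsilon$ cancel. Your write-up in fact supplies more detail than the paper (the explicit verification of the $t_0$ formula and of the $\varepsilon_0$ threshold), but the argument is identical.
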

The interest of the above theorem is that the decay is preserved in
the scaling that leads to the heat equation. In addition, for small
$\varepsilon$ the expression of the decay is exactly of the same form as
that of the heat equation, given in \eqref{eq:heat-decay}.

\medskip
\noindent \textbf{Comparison to results in the literature.} Several
precise results exist already regarding the decay properties of
equation \eqref{eq:nonlocal}. Let us give a brief review and compare
them to our own. Nonlocal diffusions including \eqref{eq:nonlocal}
have been studied in \cite{CCR06}, and we refer the reader to the
recent book \cite{AMRT2010} for background and an extensive review of
the state of the art for equations involving similar nonlocal terms. A
similar approximation to the heat equation, with a particular kernel
$J$, was studied in \cite{Toscani}, and some nonlocal approximations
to Fokker-Planck equations have been recently considered in
\cite{MT2016} and very recently in \cite{T2017}.

The observation that solutions to
\eqref{eq:nonlocal} decay asymptotically like the heat equation has
been present since the first works on the matter, with several
analogues of \eqref{eq:heat-decay}. The first ones were based on the
Fourier transform of \eqref{eq:nonlocal}, which is explicitly solvable
\cite{CCR06,IR2007,citeulike:3617261}. Energy methods were considered
in \cite{citeulike:5333882}; results were given on the decay of
several models including the linear nonlocal diffusion equation
\eqref{eq:nonlocal} and a nonlocal version of the $p$-Laplacian
evolution equation. The method in \cite{citeulike:5333882} is
different from ours, and is based on a splitting of the function $u$
into a ``smooth'' part and a ``rough'' part. The ideas are somehow
reminiscent of ours, since they borrow techniques from Fourier
splitting by \cite{Schonbek} and there is a parallel with our
splitting of the function $u$ in Fourier space. The results from
\cite{citeulike:5333882} are in dimensions $N \geq 3$ and $K$
symmetric; on the other hand, they are well-adapted to nonlinear
problems like the nonlocal $p$-Laplacian equation. Our inequality
seems to be a simpler argument which works in any dimension, is
well-adapted to the linear nonlocal diffusion operator, but does not
easily carry over to nonlinear nonlocal operators. It also gives a
simple way to track the dependence of the decay on the parameters of
the problem, especially the diffusion kernel $J$.

Inequalities of the type \eqref{ineq-principal} were already noticed
in \cite{BP13}, and used in order to obtain decay and regularisation
properties for nonlinear diffusions of the type \eqref{eq:nonlocal}
where the function $J$ typically behaves as $|x|^{-N-\alpha}$ as
$x \to +\infty$, for some $0 < \alpha \leq 2$. Their proof goes
along the lines of \cite{citeulike:5333882}. Inequality
\eqref{ineq-principal} is a limit case of their results, but is not
included there for similar reasons as in \cite{citeulike:5333882}.

As compared to previous results, we summarise our contributions as
follows:
\begin{enumerate}
  
\item Inequality \eqref{ineq-principal} seems to be new. Similar ideas
  were used in \cite{citeulike:5333882,BP13}, but
  \eqref{ineq-principal} is a limiting case not included in these
  works.
  
\item Our proof of the inequality \eqref{ineq-principal} is
  straightforward, works in any dimension, and in our opinion
  simplifies previous arguments for related inequalities. It also
  leads to a precise estimate of the constants in the inequality,
  which have in particular the correct scaling when approximating the
  heat equation (see Theorem \ref{thm:heat-approx-decay}).
  
\item A similar method of proof yields inequalities and decay results
  involving higher derivatives of the function $u$; see Section
  \ref{sec:ineq-deriv}.
  
\item The entropy method used allows for an extension to linear
  mass-conserving equations with general kernels $K(x,y)$ (not
  necessarily symmetric) instead of $J(x-y)$; see Subsection
  \ref{no-convolution}.
  
\end{enumerate}

The paper is organised as follows: in Section \ref{sec:inequalities}
we give the proof of the inequality in Theorem \ref{teo-principal},
and in Section \ref{sec:ineq-deriv} we prove similar inequalities
involving derivatives. Finally, in Section \ref{sec:applications} we
show how these inequalities yield decay properties for several
equations involving general kernels $K(x,y)$, in particular proving
Theorem \ref{thm:decay-main} in Subsection \ref{sec:linear}.

\section{Energy inequalities for nonlocal diffusion operators}
\label{sec:inequalities}

We are interested in finding useful lower bounds of $\D_p^J(u)$ in
terms of $L^p$ norms of $u$. Since
$(|a|-|b|)(|a|^s - |b|^s) \leq (a-b) (\phi_s(a) - \phi_s(b))$ for any
$a, b \in \R$ and $s > 1$ (where $\phi_s(a) := |a|^s \sgn(a)$), it is
easily seen that
\begin{equation*}
  \D_p^J(u) \geq \D_p^J(|u|)
\end{equation*}
for any measurable $u \: \R^N \to \R$. This allows us to work only
with nonnegative functions $u$.

This section is devoted to the proof of Theorem
\ref{teo-principal}. We first show the case $p=2$, and then deduce
from it the general inequality for $p \geq 2$. The proof of the $p=2$
case is a modification of a the original proof of the Nash inequality
\eqref{eq:GNS} appearing in the paper by \cite{nash1958continuity}:

\begin{lemma}
  \label{lemma}
  Let $I$ be the normalised characteristic function of the unit ball
  in $\R^N$,
  \begin{equation}\label{normalised}
    I(z) := \frac{1}{\omega_N} \quad \text{if $|z| < 1$},
    \qquad
    I(z) = 0 \quad \text{otherwise,}
  \end{equation}
  where $\omega_N$ is the volume of the unit ball in dimension
  $N$. There exists a constant $C = C(N)$ depending only on $N$ such
  that
  \begin{equation}
    \label{eq:energy-ineq}
    \D^I_2(u)
    \geq
    C \, \min \left\{
      \|u\|_1^{-\frac{4}{N}} \|u\|_2^{2 + \frac{4}{N}},\ \|u\|_2^2
    \right\},
  \end{equation}
  for all $u \in L^1(\R^N) \cap  L^2(\R^N)$.
\end{lemma}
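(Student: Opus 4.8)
The plan is to adapt Nash's Fourier-splitting proof of the classical Nash inequality \eqref{eq:GNS}, with the Dirichlet energy $\irN |\xi|^2 |\widehat u(\xi)|^2 \, \d\xi$ replaced by the dissipation $\D_2^I(u)$. Using the Fourier transform normalised by $\widehat u(\xi) := \irN u(x) e^{-\mathrm{i}\,\xi \cdot x} \dx$, the first step is to rewrite $\D_2^I(u)$ in frequency variables: expanding $(u(x)-u(y))^2 = u(x)^2 - 2u(x)u(y) + u(y)^2$ in \eqref{eq:Dp2} with $p=2$, and using that $I$ is even with $\irN I = 1$, the two square terms each contribute $\|u\|_2^2$ (by Fubini), while the cross term equals $-2 \langle u, I * u \rangle = -2(2\pi)^{-N} \irN \widehat I(\xi) |\widehat u(\xi)|^2 \, \d\xi$ by Plancherel and $\widehat{I*u} = \widehat I\, \widehat u$, with $\widehat I$ real since $I$ is even. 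Together with $\|\widehat u\|_2^2 = (2\pi)^N \|u\|_2^2$ this gives the identity
\[
  \D_2^I(u) = \frac{2}{(2\pi)^N} \irN \bigl( 1 - \widehat I(\xi) \bigr) \, |\widehat u(\xi)|^2 \, \d\xi ,
\]
in which $0 \le 1 - \widehat I \le 2$ because $|\widehat I| \le \|I\|_1 = 1$.

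The main point, and the only genuinely nontrivial step, is the pointwise lower bound
\[
  1 - \widehat I(\xi) \ \ge\ c \, \min\{ |\xi|^2, 1 \} \qquad \text{for all } \xi \in \R^N ,
\]
for some constant $c = c(N) > 0$; this is where the shape of $I$ enters, namely its finite nonzero second moment together with the compact support that forces $\widehat I \to 0$ at infinity. I would prove it by writing $1 - \widehat I(\xi) = \irN I(z) \bigl( 1 - \cos(\xi \cdot z) \bigr) \, \d z$. For $|\xi| \le 1$ one has $|\xi \cdot z| \le 1$ on $\supp I$, hence $1 - \cos(\xi \cdot z) \ge \tfrac14 (\xi \cdot z)^2$, and integrating yields $1 - \widehat I(\xi) \ge \tfrac14 \bigl( \irN I(z) z_1^2 \, \d z \bigr) |\xi|^2$, a positive multiple of $|\xi|^2$. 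For $|\xi| \ge 1$ I would use that $1 - \widehat I$ is continuous, strictly positive for $\xi \neq 0$ (since then $1 - \cos(\xi \cdot z)$ is positive on a subset of $\{ |z| < 1 \}$ of positive measure), and tends to $1$ as $|\xi| \to \infty$ by the Riemann--Lebesgue lemma, so that $\inf_{|\xi| \ge 1}(1 - \widehat I) > 0$; taking $c$ to be the minimum of the two constants completes this step.

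With the identity and the lower bound in hand, the final step is Nash's splitting. Fix $\rho \in (0,1]$, split $\R^N = \{|\xi| < \rho\} \cup \{|\xi| \ge \rho\}$ in the Plancherel formula $\|\widehat u\|_2^2 = (2\pi)^N\|u\|_2^2$, bound the low frequencies by $\|\widehat u\|_\infty \le \|u\|_1$, and bound the high frequencies using the lower bound above together with $\min\{|\xi|^2,1\} \ge \rho^2$ on $\{|\xi| \ge \rho\}$ (valid since $\rho \le 1$) and $1 - \widehat I \ge 0$. This gives, for every $\rho \in (0,1]$,
\begin{align*}
  (2\pi)^N \|u\|_2^2
  &= \int_{|\xi| < \rho} |\widehat u|^2 \, \d\xi + \int_{|\xi| \ge \rho} |\widehat u|^2 \, \d\xi \\
  &\le \omega_N \rho^N \|u\|_1^2 + \frac{(2\pi)^N}{2 c\, \rho^2} \, \D_2^I(u).
\end{align*}
I would then optimise in $\rho$. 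Let $\rho_\ast > 0$ be defined by $\omega_N \rho_\ast^N \|u\|_1^2 = \tfrac12 (2\pi)^N \|u\|_2^2$, so that $\rho_\ast^2$ equals a dimensional constant times $(\|u\|_2^2 / \|u\|_1^2)^{2/N}$. If $\rho_\ast \le 1$, the choice $\rho = \rho_\ast$ absorbs the first term into the left-hand side and yields $\D_2^I(u) \ge C(N) \|u\|_1^{-4/N} \|u\|_2^{2 + 4/N}$. If $\rho_\ast > 1$, the choice $\rho = 1$ gives $\omega_N \|u\|_1^2 \le \tfrac12 (2\pi)^N \|u\|_2^2$ and hence $\D_2^I(u) \ge C(N) \|u\|_2^2$. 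Since $\min\{A,B\}$ never exceeds either of its arguments, in both regimes one concludes $\D_2^I(u) \ge C(N) \min\{ \|u\|_1^{-4/N} \|u\|_2^{2+4/N}, \, \|u\|_2^2 \}$, which is \eqref{eq:energy-ineq}. (Replacing $u$ by $|u|$ is not needed in this $p = 2$ case, though it is harmless since $\D_2^I(u) \ge \D_2^I(|u|)$ and the $L^1$ and $L^2$ norms are unchanged.)
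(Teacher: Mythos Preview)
Your proof is correct and follows essentially the same route as the paper's: both rewrite $\D_2^I(u)$ via Plancherel as $2\int (1-\widehat I)|\widehat u|^2$, invoke the pointwise lower bound $1-\widehat I(\xi)\ge c\min\{|\xi|^2,1\}$, split frequencies at a threshold $\rho\in(0,1]$, and distinguish the two cases according to whether the (near-)optimal $\rho$ falls below $1$ or not. The only differences are cosmetic---you supply a proof of the lower bound on $1-\widehat I$ that the paper merely asserts, and you choose $\rho$ so that the low-frequency term equals half the left-hand side rather than minimising the right-hand side---but the structure and idea are the same.
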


We point out that the constant $C$ can be estimated explicitly by
following the calculations in the proof below.

\begin{proof}
  Along the proof we call $C_1, C_2, \dots$ several constants that
  depend only on the dimension $N$. We will use the following
  property, which holds for some constant $C_1 > 0$:
  \begin{equation*}
    1 - \hat{I}(\xi) \geq \frac{1}{C_1} \min \{1, |\xi|^2\},
    \qquad \text{for all $\xi \in \R^N$}
  \end{equation*}
  or, in other words,
  \begin{equation}
    \label{eq:1}
    (1 - \hat{I}(\xi))^{-1} \leq C_1 \max \{1, |\xi|^{-2}\},
    \qquad \text{for all $\xi \in \R^N$}.
  \end{equation}
  Since $I$ has integral one we can write, using that the Fourier
  transform is an isometry of $L^2(\R^N; \C)$,
  \begin{equation*}
    \D_2^I(u) = 2 \ap{u,  u - I * u }
    = 2 \big\langle \hat{u}, (1-\hat{I}) \hat{u} \big\rangle
    = 2 \irN (1 - \hat{I}) |\hat{u}|^2,
  \end{equation*}
  where $\ap{\cdot, \cdot}$ denotes the usual inner product in the
  space of $L^2$ complex functions in $\R^N$. We can break the
  integral of $\|u\|_2$ in two parts, for any
  $\delta > 0$:
  \begin{equation}
    \label{eq:5}
    \|u\|_2^2 = \|\hat u\|_2^2
    = \int_{|\xi|\leq \delta}|\hat u(\xi)|^2\d\xi
    + \int_{|\xi|> \delta}|\hat u(\xi)|^2\d\xi.
  \end{equation}
  These two terms can be estimated as follows: for the first one,
  \begin{equation}
    \label{eq:6}
    \int_{|\xi|\leq \delta}|\hat u(\xi)|^2\d\xi
    \leq
    \|u\|_1^2 \int_{|\xi|\leq \delta} \d\xi
    \leq
    \omega_N \delta^N  \|u\|_1^2.
  \end{equation}
  For the second one, using \eqref{eq:1} and assuming $\delta < 1$ we
  have
  \begin{equation}
    \label{eq:7}
    \begin{split}
      \int_{|\xi|> \delta}|\hat u(\xi)|^2\d\xi &\leq C_1 \int_{|\xi|>
        \delta}\left(1-\hat I(\xi)\right) \max\{ 1, |\xi|^{-2}\}
      \,|\hat u(\xi)|^2\d\xi
      \\
      & \leq C_1 \int_{|\xi|> \delta}\left(1-\hat
        I(\xi)\right) \max\{ 1, \delta^{-2}\} \,|\hat u(\xi)|^2\d\xi
      \\
      & \leq \frac{C_1}{\delta^2} \int_{|\xi|>
        \delta}\left(1-\hat I(\xi)\right)\,|\hat u(\xi)|^2\d\xi
      \leq \frac{C_1}{\delta^2} \D_2^I(u).
    \end{split}
  \end{equation}
  Using \eqref{eq:6} and \eqref{eq:7} in \eqref{eq:5} we obtain
  \begin{equation}
    \label{eq:2}
    \|u\|_2^2 \leq
    \omega_N \delta^N  \|u\|_1^2 + \frac{C_1}{\delta^2} \D_2^I(u),
    \qquad \text{for any $0 < \delta < 1$.}
  \end{equation}
  We would like to optimise this quantity in $\delta$, but \emph{it is
    only valid for} $0 < \delta < 1$. If we could choose $\delta$
  freely we would take the one that achieves the best bound in the
  inequality \eqref{eq:2}, that is,
  \[
  \delta_0 := \left(
    \frac{2 C_1 \D_2^I(u)}{N \omega_N \|u\|_1^2}
  \right)^{\frac{1}{N+2}}.
  \]
  Now we discuss two cases: \medskip
  \\
  \textbf{Case 1.} If $\delta_0 < 1$, then replacing $\delta$ by
  $\delta_0$ in \eqref{eq:2} we have
  \begin{equation*}
    \|u\|_2^2 \leq
    \omega_N^{\frac{2}{N+2}} C_1^{\frac{N}{N+2}}
    \left(1 + \frac{N}{2} \right)
    \left(\frac{2}{N}\right)^{\frac{N}{N+2}}
    \|u\|_1^{\frac{4}{N+2}}\D_2^I(u)^{\frac{N}{N+2}}.
  \end{equation*}
  Equivalently,
  \begin{equation}
    \label{2}
    \D_2^I(u)
    \geq
    C_2 \|u\|_1^{-\frac{4}{N}} \|u\|_2^{2 + \frac{4}{N}}
  \end{equation}
  where
  $C_2 = \omega_N^{-\frac{2}{N}} C_1^{-1} \left(1 + \frac{N}{2}
  \right)^{-\frac{N+2}{N}} \frac{N}{2}$.
  \medskip
  \\
  \textbf{Case 2.} If $\delta_0 \geq 1$ then this means that
  \begin{align*}
    N \omega_N \|u\|_1^2 \leq 2 C_1 \D_2^I(u).
  \end{align*}
  In this case, choosing $\delta=1$ in \eqref{eq:2} and using the
  above inequality we get
  \begin{equation*}
    \|u\|_2^2
    \leq \omega_N \|u\|_1^2 + C_1 \,\D_2^I(u)
    \leq \left(1 + \frac{2}{N} \right) C_1 \, \D_2^I(u),
  \end{equation*}
  or
  \begin{equation}
    \label{3}
    \D_2^I(u) \geq C_3 \|u\|_2^2
  \end{equation}
  with $C_3 := C_1^{-1} \left(1 + \frac{2}{N} \right)^{-1}$.
  
  \medskip
  Finally, summarising \eqref{2} and \eqref{3} we obtain
  \begin{equation}
    \label{eq:4}
    \D_2^I(u) \geq
    C_4 \, \min \left\{
      \|u\|_1^{-\frac{4}{N}} \|u\|_2^{2 + \frac{4}{N}},
      \,
      \|u\|_2^2
    \right\}
  \end{equation}
  with $C_4 := \max \left\{ C_2, C_3 \right\}$. This proves
  \eqref{eq:energy-ineq} with $C = C_4$.
\end{proof}

Notice that $\D_2^J(u)$ satisfies the following scaling property. For
$\lambda > 0$ and any function $g$ on $\R^N$ we
denote
\begin{equation*}
  g_\lambda(z) := g(z / \lambda),
  \qquad z \in \R^N.
\end{equation*}
Then one sees that
\begin{equation}
  \label{eq:3}
  \D_2^{J_\lambda}(u) = \lambda^{2N} \D_2^J(u_{\frac{1}{\lambda}}).
\end{equation}
This easily gives the following extension of Lemma \ref{lemma}:
\begin{corollary}[$L^2$ energy inequality]
  \label{cor:energy-ineq}
  Let $J$ satisfy Hypothesis \ref{hyp:J}. There is some constant
  $C = C(N)$ that depends only on the dimension $N$ such that
  \begin{equation}
    \label{eq:energy-ineq-2}
    \D_2^J(u) \geq
    C r
    \min \left\{
      R^{N + 2} \|u \|_1^{-\frac{4}{N}} \|u\|_2^{2 + \frac{4}{N}},\
      R^{N} \|u \|_2^2
    \right\}
  \end{equation}
  for all $u \in L^2(\R^N) \cap L^1(\R^N)$.
\end{corollary}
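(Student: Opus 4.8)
The plan is to reduce the statement to Lemma \ref{lemma} by a pointwise comparison of kernels followed by the scaling identity \eqref{eq:3}, so that essentially no new analysis is needed. First I would record two elementary facts about the map $J \mapsto \D_2^J(u)$: it is \emph{linear} in $J$, and it is \emph{monotone}, in the sense that $\D_2^{J_1}(u) \ge \D_2^{J_2}(u)$ whenever $J_1 \ge J_2 \ge 0$ pointwise. Both are immediate from the definition \eqref{eq:Dp2}, since the integrand is $J(x-y)$ multiplied by a quantity that is nonnegative for every $x,y$.

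Next I would compare $J$ with a suitable multiple of the rescaled bump $I_R := I(\cdot/R)$, where $I$ is the normalised characteristic function of the unit ball from \eqref{normalised}. By Hypothesis \ref{hyp:J}, for $|z| < R$ we have $J(z) \ge r = r \omega_N I_R(z)$, while for $|z| \ge R$ we have $J(z) \ge 0 = r \omega_N I_R(z)$; hence $J \ge r \omega_N I_R$ pointwise. Combined with linearity and monotonicity this gives
\[
  \D_2^J(u) \ge r \omega_N\, \D_2^{I_R}(u).
\]

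Then I would unwind the scaling. Applying \eqref{eq:3} with $\lambda = R$ yields $\D_2^{I_R}(u) = R^{2N}\, \D_2^{I}(u_{1/R})$, and Lemma \ref{lemma} applied to $u_{1/R} \in L^1(\R^N)\cap L^2(\R^N)$ gives
\[
  \D_2^{I}(u_{1/R}) \ge C \min\Big\{ \|u_{1/R}\|_1^{-\frac{4}{N}} \|u_{1/R}\|_2^{2+\frac{4}{N}},\ \|u_{1/R}\|_2^2 \Big\}.
\]
A change of variables gives $\|u_{1/R}\|_1 = R^{-N}\|u\|_1$ and $\|u_{1/R}\|_2^2 = R^{-N}\|u\|_2^2$. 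Substituting, collecting the powers of $R$, then multiplying by $R^{2N}$ and by $r\omega_N$, and finally absorbing $\omega_N$ into the dimensional constant, produces exactly \eqref{eq:energy-ineq-2}.

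There is no genuine obstacle here; the argument is routine once the comparison $J \ge r\omega_N I_R$ is in place. The only point that warrants a moment's care is the exponent bookkeeping inside the first branch of the minimum, namely $\big(R^{-N}\big)^{-4/N}\big(R^{-N/2}\big)^{2+4/N} = R^{4}\cdot R^{-N-2} = R^{2-N}$, after which the prefactor $R^{2N}$ restores the announced powers $R^{N+2}$ and $R^{N}$.
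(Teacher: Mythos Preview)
Your proof is correct and follows essentially the same route as the paper: a pointwise comparison of $J$ with a rescaled multiple of the normalised indicator $I$, the scaling identity \eqref{eq:3}, and Lemma \ref{lemma} applied to $u_{1/R}$. The only cosmetic difference is that the paper packages the comparison by introducing the auxiliary kernel $K(z) := \frac{1}{r\omega_N} J(Rz) \ge I(z)$ and then writes $J = r\omega_N K_R$, whereas you compare $J \ge r\omega_N I_R$ directly; the two are equivalent and your exponent bookkeeping is correct.
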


\begin{proof}[Proof of Corollary \ref{cor:energy-ineq}]
  Call $I = I(z)$ the normalised characteristic of the unit ball, and define
  \begin{equation*}
    K(z) := \frac{1}{r\, \omega_N} J(R z),
    \qquad z \in \R^N.
  \end{equation*}
  Then
  \begin{equation*}
    K(z) \geq I(z)
    \qquad \text{for all $z \in \R^N$}
  \end{equation*}
  so
  \begin{equation*}
    \D^K_2(u) \geq \D^I_2(u).
  \end{equation*}
  Since $J = r \omega_N K_R$, due to the scaling \eqref{eq:3}
  we have
  \begin{equation*}
    \D^J_2(u) = r\, \omega_N R^{2N} \D_2^K (u_{\frac{1}{R}})
    \geq  r\, \omega_N R^{2N} \D_2^I (u_{\frac{1}{R}}).
  \end{equation*}
  Hence we can use Lemma \ref{lemma} (writing $C_N$ to denote the
  constant $C$ in it) to say that
  \begin{align*}
    \D_2^J(u)
    &\geq
    r \omega_N R^{2N}
    C_N \, \min \left\{
    \|u_{\frac{1}{R}}\|_1^{-\frac{4}{N}} \|u_{\frac{1}{R}}\|_2^{2 + \frac{4}{N}},\ \|u_{\frac{1}{R}}\|_2^2
    \right\}
    \\
    &=
    r \omega_N
    C_N \, \min \left\{
      R^{N + 2} \|u \|_1^{-\frac{4}{N}} \|u\|_2^{2 + \frac{4}{N}},\
      R^{N} \|u \|_2^2
    \right\}.
  \end{align*}
  This shows the result.
\end{proof}

\medskip Corollary \ref{cor:energy-ineq} gives the case $p=2$ of
Theorem \ref{teo-principal}. In order to obtain the general case for
$p \geq 2$ and complete the proof, let us first state a simple
elementary without proof inequality in the next lemma:

\medskip

\begin{lemma}
%  \label{lem:p-ineq}
  Let $p>1$, there exists $c(p)>0$ such that
  \begin{equation}\label{B}
    (a-b)(a^{p-1}-b^{p-1})\geq c(p)\,(a^{p/2}-b^{p/2})^2,
    \hspace{0,4cm} \textrm{ for all }a,b\geq 0.
  \end{equation}
\end{lemma}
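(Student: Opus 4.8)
The plan is to prove \eqref{B} — in fact with the sharp constant $c(p)=\frac{4(p-1)}{p^2}$ — by representing both differences as integrals and applying the Cauchy--Schwarz inequality. Since both sides of \eqref{B} are symmetric under exchanging $a$ and $b$ and vanish when $a=b$, one may assume $a>b\geq 0$. First I would record the elementary identities
\[
a^{p-1}-b^{p-1} = (p-1)\int_b^a s^{p-2}\,\d s,
\qquad
a^{p/2}-b^{p/2} = \tfrac{p}{2}\int_b^a s^{p/2-1}\,\d s,
\]
valid for $p>1$ and $a>b\geq 0$, both integrals being finite because $p-2>-1$ (so the formulas persist down to $b=0$).

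Next I would estimate the second integral by Cauchy--Schwarz, writing $s^{p/2-1}=s^{(p-2)/2}\cdot 1$, to obtain $\int_b^a s^{p/2-1}\,\d s \leq \bigl(\int_b^a s^{p-2}\,\d s\bigr)^{1/2}(a-b)^{1/2}$. Squaring, multiplying by $p^2/4$, and substituting the two identities above gives
\[
(a^{p/2}-b^{p/2})^2 \leq \frac{p^2}{4}\,(a-b)\int_b^a s^{p-2}\,\d s = \frac{p^2}{4(p-1)}\,(a-b)(a^{p-1}-b^{p-1}),
\]
which is exactly \eqref{B} with $c(p)=\frac{4(p-1)}{p^2}$. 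The degenerate cases $a=b$ (both sides zero) and $b=0$ (both sides equal $a^p$ up to the constant) are immediate.

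I do not expect a genuine obstacle here; the only points needing a word of care are the integrability of $s^{p-2}$ near the origin in the range $1<p<2$ (covered by $p-2>-1$) and the degenerate cases just mentioned. As an alternative, one can dispense with integrals entirely: both sides of \eqref{B} are homogeneous of degree $p$, so after setting $b=1$ it suffices to bound below the continuous function $t\mapsto (t-1)(t^{p-1}-1)/(t^{p/2}-1)^2$ on $(0,\infty)\setminus\{1\}$, which extends continuously across $t=1$ with value $\frac{4(p-1)}{p^2}$ and has finite, positive limits (equal to $1$) as $t\to 0^+$ and $t\to\infty$, hence has strictly positive infimum. The Cauchy--Schwarz argument is shorter and yields the optimal constant directly, so that is the route I would take.
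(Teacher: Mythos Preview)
Your proof is correct. In fact the paper does not prove this lemma at all: it is introduced as ``a simple elementary \dots\ inequality'' and stated without proof. Your Cauchy--Schwarz argument is a clean and standard way to establish it, and it has the bonus of producing the sharp constant $c(p)=\tfrac{4(p-1)}{p^2}$, which the paper does not record. The care you take with integrability of $s^{p-2}$ near the origin for $1<p<2$ and with the degenerate cases is appropriate, and the alternative homogeneity-plus-compactness argument you sketch is also valid (though, as you note, it does not identify the optimal constant). There is nothing to correct here.
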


%\begin{proof}
%  There is no loss of generality in assuming $a > b$. Dividing by $a^p$
%  (which is not $0$) and denoting $\theta = b/a \in [0,1)$, \eqref{B}
%  is equivalent to showing that
%  \[
%  F(\theta) :=
%  \frac{(1-\theta)(1-\theta ^{p-1})}{(1-\theta^{p/2})^2}
%  \geq c(p),
%  \hspace{0.4cm} \theta \in [0,1).
%  \]
%  It is a simple matter to check that $F$ is decreasing in $[0,1)$, so
%  one may take $c(p) = \lim_{\theta \to 1} F(\theta) =\frac{4(p-1)}{p^2}>0$.
%\end{proof}

\medskip
We can now complete the proof of Theorem \ref{teo-principal}:

\begin{proof}[Proof of Theorem \ref{teo-principal}]
  As explained at the beginning of Section \ref{sec:inequalities}, we
  may assume that $u$ is nonnegative. By using the inequality 
  \eqref{B} we obtain
  \begin{align*}
    \D_p^J(u) & = \irN \irN J(x-y)(u(x)-u(y))(u(x)^{p-1}-u(y)^{p-1})dxdy
    \\
              & \geq c(p) \irN \irN J(x-y)(u(x)^{p/2}-u(y)^{p/2})^2dxdy
    \\
              & = c(p) \D_2^J(u^{p/2}).
  \end{align*}
  Now, by virtue of Corollary \ref{cor:energy-ineq}, and calling $C_N$
  the constant in it, it follows that
  \begin{align*}
    \D_p^J(u)
    & \geq c(p) C_N r\,
      \min \left\{
      R^{N+2} \|u^{p/2}\|_1^{-\frac{4}{N}}\, \|u^{p/2}\|_2^{2+\frac{4}{N}},\,
      R^N \|u^{p/2}\|_2^2
      \right\}
    \\
    & = c(p) C_N r\, \min \left\{
      R^{N+2} \|u\|_{\frac{p}{2}}^{-\frac{2p}{N}}\, \|u\|_p^{p\left(1+\frac{2}{N} \right)},\,
      R^N \|u\|_p^p
      \right\}.
  \end{align*}
  Finally, due to the interpolation formula
  \begin{equation*}
    \|u\|_{\frac{p}{2}}\leq \|u\|_1^{\frac{1}{p-1}} \, \|u\|_p^{\frac{p-2}{p-1}}
  \end{equation*}
  (note that $p \geq 2$ is used here) we conclude that
  \begin{equation*}
    \D_p^J(u) \geq c(p) C_N\, r\,
    \min \left\{
      R^{N+2} \|u\|_1^{-p\,\gamma}\, \|u\|_p^{p(1+\gamma)},\,
      R^N \|u\|_p^p  \right\},
  \end{equation*}
  with $\gamma = \frac{2}{N(p-1)}$.
\end{proof}

\section{Energy inequalities involving derivatives}
\label{sec:ineq-deriv}

We now prove Theorem \ref{k derivative estimate}, an inequality which
is useful when studying the decay of derivatives of solutions to
nonlocal diffusion equations:

\begin{proof}[Proof of Theorem \ref{k derivative estimate}]
  The proof is a direct extension of the technique in the proof of
  Theorem \ref{teo-principal}. We follow the same steps. First, we
  assume that $J$ is the normalised characteristic function of the
  unit ball in $\RR^N$, given by \eqref{normalised}. Then, closely
  following Lemma \ref{lemma}, we claim
  \begin{equation}
    \label{ineq-k-normal}
    \D_2^J(D^k u) \geq
    C_N \min \left\{ \|u\|_1^{-\frac{4}{N+2k}}\, \|D^k u\|_2^{2+\frac{4}{N+2k}},
      \,\, \|D^k u\|_2^2
    \right\}
  \end{equation}
  for some constant $C_N > 0$ depending only on $N$. As in the proof
  of Lemma \ref{lemma},
  \begin{equation*}
    \D_2^J(D^k u) = 2 \irN (1 - \hat{J}) |\widehat{D^k u}|^2.
  \end{equation*}
  Now, recalling inequality \eqref{eq:1} and taking into account that
  $| \widehat{D^k u}(\xi)|^2 =|\xi|^{2k} |\hat{u}(\xi)|^2 \leq
  |\xi|^{2k} \|u\|_1^2$ we obtain for $0<\delta \leq 1$ that
  \begin{align}
    \label{delta gradient}
    \nonumber \|D^k u\|_2^2 =
    \|\widehat{D^k u}\|_2^2
    & = \int_{|\xi|\leq \delta} |\widehat{D^k u}(\xi)|^2 \d\xi
      + \int_{|\xi| > \delta} |\widehat{D^k u}(\xi)|^2 \d\xi
    \\  
    & \leq \|u\|_1^2 \int_{|\xi|\leq \delta} |\xi|^{2k} \d\xi
      + \frac{C_1}{\delta^2} \int_{|\xi| > \delta} ({1-\hat{J}(\xi)})
      |\widehat{D^k u}(\xi)|^2 \d\xi
    \\
 \nonumber   & \leq \omega_N \delta^{2k + N} \, \|u\|_1^2
      + \frac{C_1}{2 \delta^2}\,\D_2^J(D^k u).
  \end{align}
  Choose
  \[
  \delta_0=\left( \frac{2\,C_1\,\D_2^J(D^ku)}{(N+2k)\,\|u\|_1^2\,\omega_N}   \right)^{\frac{1}{N+2k+2}}.
  \]
  We obtain, as in Lemma \ref{lemma}, two possibilities: if
  $\delta_0 \leq 1$, we get
  \begin{equation}\label{k delta less than one}
    \|D^ku\|_2^2 \leq C_2\,\|u\|_1^{2\mu_k}\, \D_2^J(D^ku)^{1-\mu_k}
  \end{equation}
  with $\mu_k=\frac{2}{N+2+2k}$ and
  $C_2=\left(\frac{N}{2}+k \right)^{\mu_k}\left(1+\frac{2}{N+2k}
  \right)^{1-\mu_k}C_1\omega_N^{\mu_k}$.
  In the other case, $\delta_0>1$, we get
  \begin{equation}\label{k delta greater than one}
    \|D^ku\|_2^2 \leq \frac{2C_1}{(N+2k)\mu_k}\, \D_2^J(D^ku).
  \end{equation}
  Collecting inequalities \eqref{k delta less than one} and \eqref{k
    delta greater than one} we have
  \[
  \|D^ku\|_2^2
  \leq
  C_N \max \left \{
    \|u\|_1^{2\mu_k}\,\D_2^J(D^ku)^{1-\mu_k},\,
    \D_2^J(D^ku)
  \right \}
  \]
  where $C_N=\max \left \{C_2, \frac{2C_1}{(N+2k)\mu_k} \right
  \}$.
  Reversing the inequality we have thus proved \eqref{ineq-k-normal}.

  \medskip In order to complete the proof we consider any $J$
  satisfying Hypothesis \ref{hyp:J}. We have a scaling property which
  is an extension of \eqref{eq:3}:
  \begin{equation}
    \label{eq:scaling-Dk}
    \D_2^{J_\lambda} (D^k u) = \lambda^{2N-k} \D_2^J (D^k u_{1/\lambda}),
  \end{equation}
  for any $\lambda > 0$. Of course, we also have
  $D^k u_\lambda = \lambda^{-k} (D^k u)_\lambda$, the usual scaling
  for derivatives. If $I$ denotes the characteristic function of the
  unit ball on $\R^N$ and we define
  $K = \frac{1}{r \omega_N} J_{1/R}$ as in the proof of
  Corollary \ref{cor:energy-ineq} then $K \geq I$, and
  $J = r \omega_N K_R$. Using the scaling property
  \eqref{eq:scaling-Dk} and the normalised case \eqref{ineq-k-normal}
  we see that
  \begin{multline*}
    \D_2^J (D^k u) = r \omega_N R^{2N-k} \D_2^K (u_{1/R})
    \geq
    r \omega_N R^{2N-k} \D_2^I (u_{1/R})
    \\
    \geq
    r \omega_N R^{2N-k}
    C_N \min \left\{ \|u_{1/R}\|_1^{-\frac{4}{N+2k}}\, \|D^k u_{1/R}\|_2^{2+\frac{4}{N+2k}},
      \,\, \|D^k u_{1/R}\|_2^2
    \right\}
    \\
    =
    r \omega_N R^{2N-k}
    C_N \min \left\{ R^{2k - N + 2} \|u\|_1^{-\frac{4}{N+2k}}\, \|D^k u\|_2^{2+\frac{4}{N+2k}},
      \,\, R^{2k - N} \|D^k u\|_2^2
    \right\}
    \\
    =
    r \omega_N
    C_N \min \left\{ R^{k + N + 2} \|u\|_1^{-\frac{4}{N+2k}}\, \|D^k u\|_2^{2+\frac{4}{N+2k}},
      \,\, R^{k + N} \|D^k u \|_2^2
    \right\},
  \end{multline*}
  which shows the result.
\end{proof}

We point out that analogous results can be stated for other
differential operators. As an example we consider $\nabla
u$. Following the notation of the preceding section we set
\begin{equation}\label{gradient}
  \D_2^J(\nabla u) =
  \irN \irN J(x-y)\left|\nabla u(x) -\nabla u(y)  \right|^2 \dx \dy,
\end{equation}
defined for any $u \in H^1(\R^N)$. Reasoning along the same lines as
in the previous result one obtains the following result for $\nabla u$
(notice that this is not the same as the $k=1$ case of Theorem \ref{k
  derivative estimate}, since $D^1 u$ is not equal to $\nabla u$):
\begin{theorem}
  \label{gradient estimate}
  Let $N \geq 1$ be an integer and $J:\R^N \to \R$ be a function
  satisfying Hypothesis \ref{hyp:J}. There exists a positive constant
  $C=C(N)$ such that
  \begin{equation}
    \label{ineq-gradient}
    \D_2^J(\nabla u) \geq
    C r \min \left\{ R^{N+3} \|u\|_1^{-\frac{4}{N+2}}\,
      \|\nabla u\|_2^{2+\frac{4}{N+2}},
      \,\, R^{N+1} \|\nabla u\|_2^2
    \right\},
  \end{equation}
  for all $u \in H^1(\R^N) \cap L^1(\R^N)$.
\end{theorem}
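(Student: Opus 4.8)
The plan is to follow the proof of Theorem \ref{k derivative estimate} almost line by line, with $\nabla u$ in the role of $D^k u$ (and the exponent $k=1$ in the arithmetic). Step one is to treat the normalised kernel $J=I$, the characteristic function of the unit ball, and prove
\begin{equation*}
  \D_2^I(\nabla u) \geq C_N \min\left\{ \|u\|_1^{-\frac{4}{N+2}}\, \|\nabla u\|_2^{2+\frac{4}{N+2}},\ \|\nabla u\|_2^2 \right\}.
\end{equation*}
Step two is a scaling argument, structurally identical to the proof of Corollary \ref{cor:energy-ineq} and the second half of the proof of Theorem \ref{k derivative estimate}, which transfers this bound to a general $J$ satisfying Hypothesis \ref{hyp:J} and produces the powers of $r$ and $R$ in \eqref{ineq-gradient}.

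For step one, the one genuinely new point is that $\nabla u$ is vector-valued, so I would apply Plancherel componentwise: since $\widehat{\partial_j u}(\xi) = i\xi_j \hat u(\xi)$ we have $|\widehat{\nabla u}(\xi)|^2 = |\xi|^2 |\hat u(\xi)|^2$, hence
\begin{equation*}
  \D_2^I(\nabla u) = 2 \irN (1-\hat I(\xi))\, |\xi|^2\, |\hat u(\xi)|^2 \d\xi,
  \qquad
  \|\nabla u\|_2^2 = \irN |\xi|^2\, |\hat u(\xi)|^2 \d\xi.
\end{equation*}
Splitting the last integral at $|\xi| = \delta$ for $0<\delta\leq 1$, the low-frequency part is bounded, using $|\hat u(\xi)|\leq \|u\|_1$, by $\|u\|_1^2 \int_{|\xi|\leq\delta}|\xi|^2\d\xi = c_N \delta^{N+2}\|u\|_1^2$, and the high-frequency part is bounded by $\tfrac{C_1}{2\delta^2}\D_2^I(\nabla u)$ exactly as in \eqref{eq:7}, invoking \eqref{eq:1} with the extra weight $|\xi|^2$ and using $\max\{1,|\xi|^{-2}\}\leq\delta^{-2}$ on $\{|\xi|>\delta\}$ (here $\delta\leq1$ is used). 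This yields
\begin{equation*}
  \|\nabla u\|_2^2 \leq c_N\,\delta^{N+2}\,\|u\|_1^2 + \frac{C_1}{2\delta^2}\,\D_2^I(\nabla u), \qquad 0<\delta\leq1,
\end{equation*}
and the same two-case optimisation over $\delta$ as in Lemma \ref{lemma} finishes step one: the unconstrained minimiser is now a power $\tfrac{1}{N+4}$ of the ratio of the two coefficients; if it is $\leq 1$ one obtains the first term of the min (with exponent $2+\tfrac{4}{N+2}$ on $\|\nabla u\|_2$), and otherwise one sets $\delta=1$ and gets $\D_2^I(\nabla u)\geq c\,\|\nabla u\|_2^2$.

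For step two I would record the scaling identity analogous to \eqref{eq:scaling-Dk}, obtained by the change of variables $x\mapsto\lambda x$, $y\mapsto\lambda y$ in \eqref{gradient} together with the chain rule $\nabla(u_\lambda) = \lambda^{-1}(\nabla u)_\lambda$ (where $g_\lambda(z):=g(z/\lambda)$). Then, as in the proof of Corollary \ref{cor:energy-ineq}, set $K := \tfrac{1}{r\omega_N}J_{1/R}$, so that $K\geq I$ (because $J(Rz)\geq r$ for $|z|<1$ by Hypothesis \ref{hyp:J}) and $J = r\omega_N K_R$; the scaling identity then reduces $\D_2^J(\nabla u)$ to a power of $R$ times $r\omega_N\,\D_2^K(\nabla u_{1/R}) \geq r\omega_N\,\D_2^I(\nabla u_{1/R})$, to which the inequality of step one applies. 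Substituting $\|u_{1/R}\|_1 = R^{-N}\|u\|_1$ and $\|\nabla u_{1/R}\|_2^2 = R^{2-N}\|\nabla u\|_2^2$ and collecting the powers of $r$ and $R$ exactly as in the displayed chain at the end of the proof of Theorem \ref{k derivative estimate} gives \eqref{ineq-gradient}.

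There is essentially no obstacle of analytic substance: everything runs on the template already established for Lemma \ref{lemma} and Theorem \ref{k derivative estimate}, the Fourier identity above playing the role of the $p=2$ Nash step. The two points that need care are purely bookkeeping: (i) handling $\nabla u$ componentwise so that the Fourier identity carries the correct weight $|\xi|^2$ (and not $|\xi_j|^2$ for a single $j$); and (ii) tracking the exponents of $\delta$ in step one and of $R$ in step two, where an off-by-one slip is easiest — I would verify the three scaling inputs $\D_2^{J_\lambda}(\nabla u)$, $\|u_{1/R}\|_1$ and $\|\nabla u_{1/R}\|_2$ separately before combining them.
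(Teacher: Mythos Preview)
Your proposal is correct and follows essentially the same approach as the paper: the paper's proof simply writes the Fourier identity $\D_2^J(\nabla u) = 2\int (1-\hat J)|\widehat{\nabla u}|^2$, notes that $|\widehat{\nabla u}(\xi)|^2 = |\xi|^2|\hat u(\xi)|^2 \leq |\xi|^2\|u\|_1^2$, and then refers the reader to the $k=1$ case of Theorem~\ref{k derivative estimate} for the splitting, optimisation, and scaling steps. Your write-up is in fact more explicit than the paper's on both the $\delta$-optimisation and the scaling bookkeeping, but the underlying argument is identical.
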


\begin{proof}
  If $J$ has integral one we can write, as before,
  \begin{equation*}
    \D_2^J(\nabla u) = 2 \ap{\nabla u,  \nabla u - J * \nabla u }
    = 2 \big\langle \widehat{\nabla u}, (1-\hat{J}) \widehat{\nabla u} \big\rangle
    = 2 \irN (1 - \hat{J}) |\widehat{\nabla u}|^2.
  \end{equation*}
  Since
  $| \widehat{\nabla u}(\xi)|^2 =|\xi|^2|\hat{u}(\xi)|^2\leq
  |\xi|^2\|u\|_1^2$,
  one can follow the same reasoning as in the $k=1$ case of Theorem
  \ref{k derivative estimate} to obtain the result.
\end{proof}

\section{Some applications}
\label{sec:applications}

\subsection{The linear nonlocal diffusion equation in convolution
  form}
\label{sec:linear}

The most direct application of the inequalities in the previous
section concerns the long-time behaviour of the linear nonlocal
diffusion equation:
\begin{equation}
  \label{eq:nonlocal2}
  \p_t u(t,x) = \irN J(x-y) (u(t,y) - u(t,x)) \d y,
\end{equation}
where $t \geq 0$ is the time variable, $x \in \RR^N$ is the space
variable, $u = u(t,x) \in \R$ is the unknown, and $J$ is the
\emph{diffusion kernel}. As a straightforward consequence of Theorem
\ref{teo-principal} we obtain Theorem \ref{thm:decay-main}, which we
prove now.

\begin{proof}[Proof of Theorem \ref{thm:decay-main}]
  The regularity of the solution $u$ allows us to write the following
  $H$-theorem for the $L^p$ norm:
  \begin{equation}\label{f}
    \ddt \|u\|_p^p = - \D_p^J(u).
  \end{equation}
  Due to Theorem \ref{teo-principal}, and taking into account that
  $\|u(t,\cdot)\|_1 = \|u_0\|_1$ (mass conservation), we have
  \begin{equation*}
    \ddt \|u\|_p^p \leq - C r \min\left\{
      R^{N+2} \|u_0\|_1^{-p \gamma}\, \|u\|_p^{p(1+\gamma)},\,
      R^N \|u\|_p^p
    \right\},
  \end{equation*}
  for some constant $C = C(N,p)$. This is a differential inequality
  for $\|u\|_p^p$ which allows us to compare it to the solution to the
  equation
  \begin{equation*}
    X'(t) = - C r \min\left\{
      R^{N+2} \|u_0\|_1^{-p \gamma}\, X(t)^{(1+\gamma)},\,
      R^N X(t)
    \right\}.
  \end{equation*}
  We can then apply Lemma \ref{lem:ode} with
  \begin{equation*}
    C_1 := C r R^{N+2} \|u_0\|_1^{-p \gamma},
    \qquad
    C_2 := C r R^N,
  \end{equation*}
  to obtain the result.
\end{proof}

\begin{lemma}
  \label{lem:ode}
  Take $C_1, C_2, \gamma > 0$ and let $X = X(t)$ be a solution on
  $[0,+\infty)$ to the ordinary differential equation
  \begin{equation}
    \label{eq:ode}
    X'(t) = -\min\left\{
      C_1 X(t)^{1+\gamma},
      C_2 X(t)
    \right\}.
  \end{equation}
  with $X(0) > 0$. Then we have
  \begin{equation}
    \label{eq:X-bound}
    X(t) \leq
    \begin{cases}
      X(0)
      &\qquad
      \text{for $t \in [0,t_0]$,}
      \\
      \big( X(0)^{-\gamma}
        + \gamma C_1 (t-t_0)
      \big)^{-\frac{1}{\gamma}}
      &\qquad \text{for $t \in (t_0,+\infty)$}
    \end{cases}
  \end{equation}
  where
  \begin{equation*}
    t_0 = \max\left\{
      0,\
      \frac{1}{C_2}
      \log \big( C_2^{-\frac{1}{\gamma}} C_1^{\frac{1}{\gamma}} X(0) \big)
    \right\}.
  \end{equation*}
\end{lemma}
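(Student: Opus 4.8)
The plan is to integrate the autonomous equation \eqref{eq:ode} essentially explicitly, keeping track of which branch of the minimum is active. First I would record two elementary facts about a solution $X$ with $X(0)>0$: the right-hand side of \eqref{eq:ode} is $\le 0$ whenever $X\ge 0$, so $X$ is non-increasing; and since $X\equiv 0$ is also a solution, $X$ stays strictly positive for all $t\ge 0$. Hence the equation makes sense on all of $[0,+\infty)$ and $X$ is monotone.

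Next I would introduce the switching level $X_\ast:=(C_2/C_1)^{1/\gamma}$, characterised for $X>0$ by the equivalence: $C_1X^{1+\gamma}\le C_2X$ if and only if $X\le X_\ast$. Thus $\min\{C_1X^{1+\gamma},C_2X\}$ equals $C_2X$ when $X\ge X_\ast$ and equals $C_1X^{1+\gamma}$ when $0<X\le X_\ast$. Because $X$ is non-increasing, the set $\{t\ge 0:X(t)>X_\ast\}$ is an interval of the form $[0,t_0)$, empty precisely when $X(0)\le X_\ast$.

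On $[0,t_0)$ the equation reads $X'=-C_2X$, so $X(t)=X(0)e^{-C_2t}$; in particular $X(t)\le X(0)$ there, and $t_0$ is exactly the time at which $X(0)e^{-C_2t}$ reaches $X_\ast$, namely $t_0=\frac{1}{C_2}\log\!\big(C_1^{1/\gamma}C_2^{-1/\gamma}X(0)\big)$, which is positive precisely when $X(0)>X_\ast$; when $X(0)\le X_\ast$ this expression is $\le 0$ and $t_0=0$, which is why the statement takes an outer maximum with $0$. In both cases $X(t_0)\le X(0)$ (with $X(t_0)=X_\ast$ in the first case and $X(t_0)=X(0)$ in the second). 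For $t\ge t_0$ we have $X(t)\le X_\ast$, so the equation becomes $X'=-C_1X^{1+\gamma}$; then $\frac{d}{dt}X^{-\gamma}=-\gamma X^{-\gamma-1}X'=\gamma C_1$, whence $X(t)^{-\gamma}=X(t_0)^{-\gamma}+\gamma C_1(t-t_0)\ge X(0)^{-\gamma}+\gamma C_1(t-t_0)$, and rearranging gives $X(t)\le\big(X(0)^{-\gamma}+\gamma C_1(t-t_0)\big)^{-1/\gamma}$, as claimed.

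The whole argument is elementary; the only step that needs a little care is the bookkeeping around the switching time, namely verifying that the closed form for $t_0$ with its outer maximum correctly unifies the two cases $X(0)>X_\ast$ and $X(0)\le X_\ast$, and that $X(t_0)\le X(0)$ in both, so that the final bound can be phrased in terms of $X(0)$ rather than $X(t_0)$. If one preferred to dispense entirely with existence/uniqueness considerations, the identical computation run as a comparison (differential-inequality) argument would suffice for the applications, since only the upper bound on $X$ is used downstream.
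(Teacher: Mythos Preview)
Your argument is correct and follows essentially the same route as the paper: you identify the switching level $X_\ast=(C_2/C_1)^{1/\gamma}$, integrate explicitly on each regime to get $X(t)=X(0)e^{-C_2 t}$ on $[0,t_0]$ and $X(t)=(X(t_0)^{-\gamma}+\gamma C_1(t-t_0))^{-1/\gamma}$ for $t\ge t_0$, and then use $X(t_0)\le X(0)$ and $X(0)e^{-C_2 t}\le X(0)$ to obtain the stated bound. The paper's proof is the same computation, with the same bookkeeping for $t_0$.
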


\begin{remark}
  \label{rem:X}
  The solution of the ordinary differential equation in the above
  lemma is actually explicit (see the proof), and we just aim to give
  a simple statement that captures the decay of the solution as
  $t \to +\infty$. One can simplify even further and say that there is
  a constant $C = C(C_1, C_2, \gamma, X(0))$ such that
  \begin{equation*}
    X(t) \leq C (1 + t)^{-\frac{1}{\gamma}},
    \qquad
    \text{for all $t \geq 0$.}
  \end{equation*}
  This is easily deduced from \eqref{eq:X-bound} with
  \begin{equation*}
    C := \sup_{t \geq 0} \frac{X(t)}{(1+t)^{-\frac{1}{\gamma}}},
  \end{equation*}
  which is finite since both $X$ and $(1+t)^{-\frac{1}{\gamma}}$ have
  the same decay as $t \to +\infty$, and obviously depends only on
  $C_1, C_2, \gamma$ and $X(0)$.
\end{remark}

\begin{proof}[Proof of Lemma \ref{lem:ode}]
  By usual theorems in ordinary differential equations, equation
  \eqref{eq:ode} has a unique solution on $[0,+\infty)$ with the given
  initial condition $X(0)$, and this solution is nonnegative on
  $[0,+\infty)$. The condition that decides which of the two terms
  achieves the minimum at each time $t$ is whether
  \begin{equation}
    \label{eq:8}
    X(t)^\gamma \leq \frac{C_2}{C_1}
  \end{equation}
  or not. Since $X$ is nonincreasing, once this condition is satisfied at a
  certain $t_0 \geq 0$ it will be satisfied for all $t \geq t_0$. With
  this it is easy to calculate the explicit solution, given by
  \begin{equation*}
    X(t) =
    \begin{cases}
      X(0) e^{-C_2 t}
      &\qquad
      \text{for $t \in [0,t_0]$,}
      \\
      \big( X(t_0)^{-\gamma}
        + \gamma C_1 (t-t_0)
      \big)^{-\frac{1}{\gamma}}
      &\qquad \text{for $t \in (t_0,+\infty)$}
    \end{cases}
  \end{equation*}
  where
  \begin{equation*}
    t_0 = \max\left\{
      0,\
      \frac{1}{C_2}
      \log \big( C_2^{-\frac{1}{\gamma}} C_1^{\frac{1}{\gamma}} X(0) \big)
    \right\}.
  \end{equation*}
  One obtains the result by noticing that $X(0) e^{-C_2 t} \leq X(0)$
  and $X(t_0) \leq X(0)$.
\end{proof}

\medskip Similarly, with the help of the previous lemma the
inequalities in Theorem \ref{k derivative estimate} imply the decay in
Theorem \ref{thm:decay-deriv}:

\begin{proof}[Proof of Theorem \ref{thm:decay-deriv}]
  If $u$ satisfies equation \eqref{eq:nonlocal} then $D^k u$ satisfies
  the same equation, with initial condition $D^k u (0,x) = D^k
  u_0(x)$. Hence we have, as in \eqref{eq:nonlocal-Lp-evol},
  \begin{equation*}
    \ddt \| D^k u \|_2^2 = -\D_2^J(D^k u).
  \end{equation*}
  Using Theorem \ref{k derivative estimate} we obtain
  \begin{equation*}
    \ddt \| D^k u \|_2^2 \leq
    - C r \min \left\{ R^{k + N + 2} \|u\|_1^{-\frac{4}{N+2k}}\,
      \|D^k u\|_2^{2+\frac{4}{N+2k}},
      \,\, R^{k + N} \|D^k u\|_2^2
    \right\}.
  \end{equation*}
  This is again a differential inequality for $\|D^k u\|_2^2$, to
  which we can apply Lemma \ref{lem:ode} with
  \begin{equation*}
    C_1 = C r R^{k+N+2} \|u_0\|_1^{-\frac{4}{N+2k}},
    \qquad
    C_2 = C r R^{k+N}.
  \end{equation*}
  This directly gives the result.
\end{proof}

\subsection{General linear mass-conserving nonlocal equations}
\label{no-convolution}

In this section we prove Theorem \ref{thm:decay-general}, which
concerns equation \eqref{general-kernel}, recalled here:
\begin{equation}
  \label{general-kernel-2}
  \p_t u(t,x) = \irN K(x,y) u(t,y) \d y - \sigma(x) u(t,x),
\end{equation}
where $K \: \RR^N \times \RR^N \to [0,\infty)$ is a general kernel
(not necessarily symmetric) and $\sigma \: \R^N \to [0,+\infty)$ is a
function.
In order to apply our strategy to equation \eqref{general-kernel-2} we
need to have suitable Lyapunov functionals for it. To our knowledge,
the most general setting in which one can do this is that of the
so-called \emph{general relative entropy} method
\citep{MMP,MMP2004-general}, which we state here in a particular case:
assume that \eqref{eq:mass-conservation} holds and that
\begin{equation}
  \label{eq:exists_equilibrium}
  \text{There exists a positive equilibrium $u_\infty \: \R^N \to (0,+\infty)$ of \eqref{general-kernel-2}}.
\end{equation}
(That is, a solution $u_\infty$ of \eqref{general-kernel-2} which does
not depend on time $t$.) Then it is known that
\begin{equation*}
  \ddt \int_{\R^N} \Phi\left( \frac{u(t,x)}{u_\infty(x)} \right)
  u_\infty(x) \d x \leq 0,
\end{equation*}
whenever $\Phi$ is a convex function and $u$ is any solution of
\eqref{general-kernel-2}. This fact is well-known in probability theory
(see the review by \cite{Chafai04}) and is a direct consequence of the
general relative entropy method \citep{MMP2004-general}. The explicit
form of its time derivative can be found in \cite{MMP2004-general}:
\begin{multline}
  \label{eq:general-entropy-evol}
  \ddt \int_{\R^N} \Phi\left( f(x)\right) u_\infty(x) \d x
  \\
  = - \int_{\R^N} \int_{\R^N}
  K(x,y) u_\infty(y) \big(
    \Phi'(f(x)) (f(x) - f(y)) - \Phi(f(x)) + \Phi(f(y))
  \big)
  \d x \d y,
\end{multline}
where we denote $f(t,x) \equiv u(t,x) / u_\infty(x)$, and where the
$t$ variable has been omitted for shortness. Notice that the integrand
is always nonnegative due to the convexity of $\Phi$. The following
particular cases are of interest for us here: for $\Phi(f) = |f|^p$
with $p > 1$ we have
\begin{equation}
  \label{eq:general-Lp-evol}
  \ddt \|u\|_p^p = - \mathcal{E}_p^K(f),
\end{equation}
where the \emph{dissipation} $\mathcal{E}_p^K(f)$ is an operator
acting only on the $x$ variable. Its expression is given by the right
hand side of \eqref{eq:general-entropy-evol} (with $\Phi(f) = |f|^p$)
and is not so simple. But if we additionally assume that
\begin{equation}
  \label{eq:detailed-balance}
  K(x,y) u_\infty(y) = K(y,x) u_\infty(x),
  \qquad \text{for all $x,y \in \R^N$},
\end{equation}
then one can check that
\begin{multline}
  \label{eq:Lp-dissipation-sym}
  \mathcal{E}_p^K(f)
  \\
  = p \int_{\R^N} \int_{\R^N}
  K(x,y) u_\infty(y) \big(
    (f(x))^{p-1} (f(x) - f(y)) - (f(x))^p + (f(y))^p
  \big)
  \d x \d y
  \\
  =
  \frac{p}{2} \int_{\R^N} \int_{\R^N} \big( f(x)
    - f(y) \big)
   \left( f(x)^{p-1}
    - f(y)^{p-1} \right)
  K(x,y) u_\infty(y) \d x \d y
\end{multline}
for all nonnegative functions $f$; note the parallel with
\eqref{eq:nonlocal-Lp-evol}. The last equality in
\eqref{eq:Lp-dissipation-sym} is obtained by noticing that the
integrals corresponding to $f(x)^p$ and $f(y)^p$ cancel out (easily
seen by using \eqref{eq:detailed-balance}), and using
\eqref{eq:detailed-balance} again to symmetrise the remaining
integral:
\begin{multline*}
   \int_{\R^N} \int_{\R^N}
  K(x,y) u_\infty(y)
    f(x)^{p-1} (f(x) - f(y))
  \d x \d y
  \\
  =
  \frac{1}{2} \int_{\R^N} \int_{\R^N} K(x,y) u_\infty(y)
   \left( f(x)^{p-1} - f(y)^{p-1} \right)
  \big( f(x) - f(y) \big)
  \d x \d y.
\end{multline*}
Condition \eqref{eq:detailed-balance} is
known in probability as the \emph{detailed balance} or
\emph{reversibility} condition (it holds for example if
$u_\infty \equiv 1$ and $K$ is symmetric). If one works in a setting
where \eqref{eq:general-Lp-evol} holds then it may still be possible
to use the inequality in Theorem \ref{teo-principal} (or related ones)
and deduce some information on the rate of decay of solutions.

\begin{proof}[Proof of Theorem \ref{thm:decay-general}]
  Condition \eqref{eq:K-existence} is easily seen to imply that the
  linear operator given by
  \begin{equation*}
    L u(x) = \irN K(x,y) u(y) \d y - \sigma(x) u(x),
    \qquad x \in \R^N,
  \end{equation*}
  is well defined and bounded both in $L^1(\R^N)$ and
  $L^p(\R^N)$. This shows that equation \eqref{general-kernel-2} with
  initial condition $u_0$ has a unique solution in
  $\mathcal{C}^1([0,+\infty), L^p(\R^N) \cap L^1(\R^N))$ which
  conserves mass (that is,
  $\int_{\R^N} u(t,x) \d x = \int_{\R^N} u_0(x) \d x$ for all
  $t \geq 0$), and that it satisfies the entropy property
  \eqref{eq:nonlocal-Lp-evol}. It is also seen easily that equation
  \eqref{general-kernel-2} preserves sign: if the initial condition is
  nonnegative (nonpositive) then $u(t,x)$ is nonnegative (nonpositive)
  for all $t,x$. As a consequence, it is enough to prove the result
  when $u_0$ is nonnegative --- the general result is then obtained by
  linearity from $u_0 = u_0^+ - u_0^-$, with $u_0^+ := \max\{u_0, 0\}$
  and to $u_0^- := \max\{-u_0, 0\}$.

  For $x, y \in \R^N$ call
  \begin{equation*}
    \tilde{K}(x,y) := r, \quad \text{if $|x-y| \leq R$},
    \qquad
     \tilde{K}(x,y) := 0 \quad \text{otherwise}
  \end{equation*}
  and
  \begin{equation*}
    J(x) := r, \quad \text{if $|x| \leq R$},
    \qquad
    J(x) := 0 \quad \text{otherwise}.
  \end{equation*}
  Due to Hypothesis \ref{eq:hyp1-general} and \eqref{eq:equilibrium-bounded} we
  have
  \begin{equation*}
    K(x,y) u_\infty(y) \geq \frac{1}{m} \tilde{K}(x,y).
  \end{equation*}
  Hence, since $\tilde{K}$ is symmetric, using the same symmetrisation
  trick as in \eqref{eq:Lp-dissipation-sym},
  \begin{align*}
    \mathcal{E}_p^K(f) & \geq \mathcal{E}_p^{\tilde{K}}(f)
    \\
   & \geq
    \frac{p}{2 m} \int_{\R^N}
    \big(
    (f(x))^{p-1} (f(x) - f(y)) - (f(x))^p + (f(y))^p
    \big)
    \tilde{K}(x,y) \d x \d y
    \\
    &=
    \frac{p}{2 m} \int_{\R^N}
    \left( f(x)^{p-1} - f(y)^{p-1} \right)
    \big( f(x) - f(y) \big)
    \tilde{K}(x,y) \d x \d y
   \\
    &= \mathcal{D}_p^J(f)
  \end{align*}
  for any nonnegative function $f$, where $\mathcal{D}_p^J(f)$ is the
  dissipation in \eqref{eq:Dp2}. Hence for the (nonnegative) solution
  $u$, using Theorem \ref{teo-principal} and calling
  \begin{equation*}
    X(t) := \int_{\R^N} f^p u_\infty
    = \int_{\R^N} \left( \frac{u(t,x)}{u_\infty(x)} \right)^p u_\infty(x) \d x
  \end{equation*}
  we have
  \begin{align*}
    \ddt X(t)
    &= -\mathcal{E}_p^K(f)
    \\
    &\leq
    - \mathcal{D}_p^J(f)
    \\
    & \leq - C \min \{\|f\|_1^{-p \gamma} \|f\|_p^{p(1+\gamma)},
    \|f\|_p^p \}
    \\
    & \leq
    - C_2 \min \{\|u_0\|_1^{-p \gamma} X(t)^{1+\gamma},
    X(t) \},
  \end{align*}
  where $C_2$ also depends on $m$, and we have used mass conservation
  and again the bounds in \eqref{eq:equilibrium-bounded}. Due to the
  differential inequality in Lemma \ref{lem:ode} we obtain that
  \begin{equation*}
    X(t) \leq C (1+t)^{-\frac{N(p-1)}{2}},
     \qquad \text{for all $t \geq 0$},
   \end{equation*}
   for some constant $C$ as stated in the result. We complete the
   proof by noticing that
   \begin{equation*}
     \|u\|_p^p
     \leq
     m^{1-p} \int_{\R^N} \left( \frac{u(t,x)}{u_\infty(x)} \right)^p u_\infty(x) \d x
     = m^{1-p} X(t).
     \qedhere
   \end{equation*}
\end{proof}

\subsection{A nonlocal dispersal equation}
\label{sec:dispersal}

We consider the following integro-differential equation (the dispersal
model that was briefly mentioned in the introduction):
\begin{equation}
  \label{dispersal-eq}
  \p_t u(t,x) =
  \ir J\left(\frac{x-y}{g(y)}\right)
  \frac{u(t,y)}{g(y)}\d y-u(t,x),
  \qquad
  \text{in $\RR \times [0,\infty)$,}
\end{equation}
with a prescribed initial data $u(x,0)=u_0(x)$ defined on $\RR$. Here
$J$ is an even, positive, smooth function such that
$\int_{\RR} J(x)\d x = 1$ and $\hbox{supp } J = [-1,1]$, and $g$ is a
continuous positive function which accounts for the {\it {dispersal
    distance}} which depends on the departing point. In this model $u$
represents the spatial distribution of a certain species, and $g$
models the heterogeneity of the environment which can affect the
distribution of a species through space-dependent dispersal
strategies. This model was proposed in \cite{CCEM} (see also
\cite{CEG-MM2011,CEG-MM2016,CEG-MM}). It was shown there that if we
assume $g$ is bounded above and below then there exists a positive
steady state solution of \eqref{dispersal-eq}, that is, a solution of
the corresponding stationary problem,
\begin{equation*}
  u_\infty(x)=
  \ir J\left(\frac{x-y}{g(y)}\right)
  \frac{u_\infty(y)}{g(y)} \d y,
  \qquad \text{in $\RR$.}
\end{equation*}
Moreover, $u_\infty$ is bounded above and below by positive
constants. It was also proved in \cite{CCEM} that any solution $u$ of
\eqref{dispersal-eq} converges to $0$ locally as $t \to \infty$. Using
the general result in Theorem \ref{thm:decay-general} we are able to
improve this asymptotic behavior obtaining a precise decay rate of the
$L^p$ norms of $u$:

\begin{theorem}
  \label{th-dispersal}
  Take $p \in [2,+\infty)$. Let $u$ be a solution of
  \eqref{dispersal-eq} with initial data
  $u_0 \in L^1(\R) \cap L^p(\R)$, and assume that
  \begin{enumerate}
  \item $J \in L^\infty(\R)$ is a bounded, nonnegative function with
    compact support, satisfying Hypothesis \ref{hyp:J},
  \item and $g$ is a continuous function satisfying
    \begin{equation*}
      \frac{1}{M} \leq g(x) \leq M,
      \qquad \text{for all $x \in \R$}
    \end{equation*}
  and  for some $M > 0$.
  \end{enumerate}
  Then for some constant $C > 0$ depending on $J$, $M$, $p$,
  $\|u_0\|_1$ and $\|u_0\|_p$,
  \begin{equation*}
    \|u\|_p^p \leq C (1+t)^{-\frac{p-1}{2}},
    \qquad \text{for all $t \geq 0$}.
  \end{equation*}
\end{theorem}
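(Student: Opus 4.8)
The plan is to reduce Theorem~\ref{th-dispersal} to a direct application of Theorem~\ref{thm:decay-general}. Equation \eqref{dispersal-eq} is exactly of the form \eqref{general-kernel-2} with kernel
\[
  K(x,y) := \frac{1}{g(y)} J\!\left(\frac{x-y}{g(y)}\right),
  \qquad \sigma(x) := 1,
\]
so the first task is to verify that this $K$ and $\sigma$ satisfy all the hypotheses needed to invoke Theorem~\ref{thm:decay-general}: namely Hypothesis~\ref{eq:hyp1-general} (uniform positivity of $K$ near the diagonal), the boundedness condition \eqref{eq:K-existence}, the mass-conservation identity \eqref{eq:mass-conservation} relating $\sigma$ to the $y$-integral of $K(y,x)$, and the existence of a two-sided bounded equilibrium $u_\infty$ as in \eqref{eq:equilibrium-bounded}.

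First I would check \eqref{eq:K-existence}. The $y$-integral $\int_\R K(x,y)\,\d y$ is bounded because $J$ is bounded with support in $[-1,1]$ and $g$ is bounded below by $1/M$: the integrand is supported in $|x-y|\le M$ and bounded by $M\|J\|_\infty$, giving a bound like $2M^2\|J\|_\infty$. The other integral $\int_\R K(y,x)\,\d y = \int_\R \frac{1}{g(x)} J\!\left(\frac{y-x}{g(x)}\right)\d y = 1$ by the change of variables $z=(y-x)/g(x)$ and $\int_\R J = 1$; this computation also shows that \eqref{eq:mass-conservation} holds, i.e. $\sigma(x)=1=\int_\R K(y,x)\,\d y$. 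Next, Hypothesis~\ref{eq:hyp1-general}: since $J$ satisfies Hypothesis~\ref{hyp:J}, there are $r_0,R_0>0$ with $J(z)\ge r_0$ for $|z|<R_0$. When $|x-y| < R_0/M \le R_0/g(y)$ we have $|(x-y)/g(y)| < R_0$, hence $K(x,y) \ge \frac{1}{g(y)} r_0 \ge \frac{r_0}{M}$. So Hypothesis~\ref{eq:hyp1-general} holds with $r := r_0/M$ and $R := R_0/M$. Finally, the existence of $u_\infty$ with $1/m \le u_\infty \le m$ for some $m>0$ is precisely the steady-state result from \cite{CCEM} quoted just before the statement of the theorem, valid because $g$ is bounded above and below; I would simply cite it.

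With all hypotheses verified, Theorem~\ref{thm:decay-general} applies verbatim and yields
\[
  \|u\|_p^p \le C(1+t)^{-\frac{N(p-1)}{2}}
\]
with $N=1$, i.e. $\|u\|_p^p \le C(1+t)^{-(p-1)/2}$, which is exactly the claimed bound; the constant $C$ depends only on $r, R, N, m, p, \|u_0\|_1, \|u_0\|_p$, and since $r, R, m$ are controlled by $J$ and $M$, this matches the dependence stated in the theorem. I do not expect any serious obstacle here — this is a routine packaging argument. The only mildly delicate point is making sure the detailed-balance / reversibility structure implicit in Theorem~\ref{thm:decay-general} (that the general relative entropy identity \eqref{eq:general-entropy-evol} and the symmetrised dissipation \eqref{eq:Lp-dissipation-sym} are available) does not require $K$ itself to be symmetric: it does not, because the proof of Theorem~\ref{thm:decay-general} only uses $K(x,y)u_\infty(y) \ge \frac{1}{m}\tilde K(x,y)$ with $\tilde K$ symmetric, together with mass conservation \eqref{eq:mass-conservation}, both of which we have checked. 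So the entire proof amounts to the verification of hypotheses above plus a one-line invocation of Theorem~\ref{thm:decay-general}.
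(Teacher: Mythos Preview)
Your proposal is correct and follows essentially the same route as the paper: identify $K(x,y)=\frac{1}{g(y)}J\!\big(\tfrac{x-y}{g(y)}\big)$, $\sigma\equiv 1$, verify Hypothesis~\ref{eq:hyp1-general}, \eqref{eq:K-existence}, \eqref{eq:mass-conservation}, cite \cite{CCEM} for the bounded equilibrium, and invoke Theorem~\ref{thm:decay-general}. The only difference is that you spell out the verifications explicitly (the paper just says ``one can check''), which is fine.
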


\begin{proof}
  Equation \eqref{dispersal-eq} is of the form \eqref{general-kernel-2}
  with $\sigma(x) = 1$ for all $x \in \R$ and
  \begin{equation*}
    K(x,y) = J\left( \frac{x-y}{g(y)} \right)\frac{1}{g(y)},
    \qquad
    \text{for $x,y \in \R$.}
  \end{equation*}
  Defining $K$ and $\sigma$ in this way, \eqref{eq:mass-conservation}
  is satisfied and one can check that this kernel $K$ satisfies
  Hypothesis \ref{eq:hyp1-general} and \eqref{eq:K-existence}.
  By the results in \cite{CCEM} we know that there exists an
  equilibrium $u_\infty$ satisfying \eqref{eq:equilibrium-bounded}
  (with $m$ depending only on the parameters of the problem), so we
  are in condition to apply Theorem \ref{thm:decay-general} and obtain
  the result.
\end{proof}

\begin{remark}
  One can pose equation \eqref{dispersal-eq} in $\R^N$ instead of
  $\R$. The only reason in Theorem \ref{th-dispersal} why we need the
  dimension $N$ to be $1$ is that we use the results in \cite{CCEM} to
  ensure there is a positive equilibrium $u_\infty$ which is bounded
  above and below. Theorem \ref{th-dispersal} is still true in
  dimension $N$ provided the existence of an equilibrium satisfying
  \eqref{eq:equilibrium-bounded} (with the same proof). Such existence
  of a bounded $u_\infty$ is to our knowledge an open problem in
  dimension $N > 1$.
\end{remark}

%%%%%%%%%%%%%%%%%%%%%%%%%%%%%%%%%%%%%%%%%%%%%%%%%%%%%%%%%%%%%%%%%%%%%%%%%%%

\subsection{Nonlocal diffusions with a nonlinear source}
\label{sec:with_f}

With very little change in our arguments we can obtain the same decay
estimates if we add a nonlinear source to equation
\eqref{general-kernel-2}, as long as the nonlinear source ``decreases
energy''. We consider
\begin{equation}
  \label{eq:nonlinear-source}
  \p_t u(t,x) =
  \irN K(x,y) u(t,y) \d y - \sigma(x) u(t,x)
  + f(u(t,x)).
\end{equation}
with $K$ and $\sigma$ as in Section \ref{no-convolution} and $f$ a
locally Lipschitz function satisfying the sign condition
\begin{equation}
  \label{eq:f-sign}
  f(s) s \leq 0,
  \qquad \text{for $s \in \R$.}
\end{equation}
With the same arguments as before we obtain the following:
\begin{theorem}
  \label{thm:nonlinear-source}
  Take $p \in [2,+\infty)$ and let $u$ be a solution of
  \eqref{eq:nonlinear-source} with nonnegative initial data
  $u_0 \in L^1(\R) \cap L^p(\R)$, and assume that $K$ and $\sigma$
  satisfy the conditions of Theorem \ref{thm:decay-general}. Assume
  that $f$ is a locally Lipschitz function satisfying
  \eqref{eq:f-sign}. Then for some constant $C > 0$ depending only on
  $K$, $N$, $\|u_0\|_1$ and $\|u_0\|_p$,
  \begin{equation*}
    \|u\|_p^p \leq C (1+t)^{-\frac{N}{2}},
    \qquad \text{for all $t \geq 0$}.
  \end{equation*}
\end{theorem}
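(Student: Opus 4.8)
The plan is to mimic the proof of Theorem~\ref{thm:decay-general} essentially line by line; the only genuinely new point is that the nonlinear source term enters the $L^p$ dissipation with a favourable sign.

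First I would deal with well-posedness and sign preservation. By \eqref{eq:K-existence} the linear part $u\mapsto \irN K(x,y)u(y)\dy - \sigma(x)u(x)$ is a bounded operator on $L^1(\R^N)$ and on $L^p(\R^N)$ and it generates a positivity-preserving semigroup; adding the locally Lipschitz perturbation $u\mapsto f(u)$ --- which satisfies $f(0)=0$, a consequence of \eqref{eq:f-sign} together with continuity --- one gets by the usual fixed-point argument a unique local solution in $\mathcal{C}^1([0,T),L^1(\R^N)\cap L^p(\R^N))$, and the a priori bound furnished by the dissipation inequality below makes it global. Since $f(0)=0$, the function $u\equiv 0$ is a solution, and a standard comparison argument then shows that $u_0\ge 0$ forces $u(t,\cdot)\ge 0$ for all $t$; here, unlike in Theorem~\ref{thm:decay-general}, nonnegativity of $u_0$ is genuinely used and cannot be removed by linearity.

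Next I would differentiate the relative entropy with respect to the equilibrium $u_\infty$ of the \emph{linear} equation --- which exists and satisfies \eqref{eq:equilibrium-bounded} because we assume the hypotheses of Theorem~\ref{thm:decay-general}. Writing $h:=u/u_\infty$ and applying the identity \eqref{eq:general-entropy-evol} to the linear part together with the contribution of the source, one obtains
\begin{equation*}
  \ddt \|u\|_p^p = -\mathcal{E}_p^K(h) + p\irN h(x)^{p-1}f(u(x))\dx .
\end{equation*}
Because $u\ge 0$ we have $h\ge 0$, and by \eqref{eq:f-sign} we have $f(u)\le 0$, so the last term is nonpositive; hence $\ddt\|u\|_p^p\le -\mathcal{E}_p^K(h)$, exactly the inequality on which the proof of Theorem~\ref{thm:decay-general} rests. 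From this point the argument is verbatim that of Theorem~\ref{thm:decay-general}: using Hypothesis~\ref{eq:hyp1-general} and the two-sided bound on $u_\infty$ one estimates $\mathcal{E}_p^K(h)$ from below by a constant times $\mathcal{D}_p^J(h)$ with $J$ the indicator of the ball of radius $R$; Theorem~\ref{teo-principal} bounds this below by the minimum of two powers of $\|h\|_p$; the mass is nonincreasing, since integrating the equation and using \eqref{eq:mass-conservation} gives $\ddt\irN u\,\dx=\irN f(u)\,\dx\le 0$, so $\|h\|_1\le m\|u_0\|_1$; and with $X(t):=\irN h^p u_\infty\,\dx$, which is comparable to $\|u\|_p^p$ up to powers of $m$, one arrives at $X'(t)\le -C\min\{\|u_0\|_1^{-p\gamma}X(t)^{1+\gamma},X(t)\}$ with $\gamma=2/(N(p-1))$. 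Lemma~\ref{lem:ode} then yields $X(t)\le C(1+t)^{-1/\gamma}=C(1+t)^{-N(p-1)/2}\le C(1+t)^{-N/2}$, whence the stated bound for $\|u\|_p^p$ (in fact with the better exponent $N(p-1)/2$).

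The only real work is the first step. The $L^p$ dissipation identity is a one-line addition to \eqref{eq:general-entropy-evol}, and everything following it transcribes the proof of Theorem~\ref{thm:decay-general}; the main obstacle is therefore establishing existence, uniqueness, sign preservation and the rigorous validity of the general-relative-entropy computation for the nonlinear problem --- the same regularity bookkeeping already needed in Subsection~\ref{no-convolution}, now with an extra locally Lipschitz term.
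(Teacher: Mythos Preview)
Your proposal is correct and follows essentially the same approach as the paper's proof: drop the nonpositive source contribution from the dissipation identity and then reproduce the proof of Theorem~\ref{thm:decay-general} verbatim, using that the total mass is nonincreasing. One minor slip: the displayed identity should have $\ddt X(t)$ (with $X(t)=\int h^p u_\infty$) on the left rather than $\ddt\|u\|_p^p$, since \eqref{eq:general-entropy-evol} computes the time derivative of the relative entropy, not of $\|u\|_p^p$ --- you in fact correct this yourself a few lines later when you define $X(t)$; otherwise the argument is sound, and you rightly observe that the method actually yields the sharper exponent $N(p-1)/2$.
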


\begin{proof}
  The conditions on $f$, $K$ and $\sigma$ ensure that there exists a
  solution of the equation, and that one may differentiate it in time
  to obtain the usual expression for the time derivative of
  $\|u\|_p^p$. Dropping the nonpositive term
  $f(u(t,x))\, u(t,x)\, |u(t,x)|^{p-2}$ we obtain the inequality
  \[
  \ddt \|u\|_p^p \leq - \mathcal{E}_p^K(u),
  \]
  Arguing as in the proof of Theorem \ref{thm:decay-general} we obtain
  the asymptotic decay. Observe that the total mass of the solution is
  nonincreasing, since $f(s)\leq 0$ for $s\geq 0$.
\end{proof}

This equation was treated in \cite{AMRT2010,citeulike:5333882} where a
restriction on the dimension ($N\geq 3$) and $K$ symmetric are required in order to
establish the asymptotic behavior.

\section*{Acknowledgements}

J.~A.~Cañizo was supported by the Spanish \emph{Ministerio de Economía
  y Competitividad} and the European Regional Development Fund
(ERDF/FEDER), project MTM2014-52056-P. A.~Molino was partially
supported by MINECO - FEDER Grant MTM2015-68210-P (Spain), Junta de
Andaluc\'ia FQM-116 (Spain) and MINECO Grant BES-2013-066595 (Spain).

\bibliographystyle{plainnat-linked-initials}
\bibliography{bibliography}

\end{document}